\numberwithin{equation}{section}
\numberwithin{figure}{section}
\newtheorem{theorem}{Theorem}[section]
\newtheorem{lemma}[theorem]{Lemma}
\newtheorem{proposition}[theorem]{Proposition}
\newtheorem{remark}[theorem]{Remark}
\newcommand{\bitem}{\begin{itemize}}
\newcommand{\eitem}{\end{itemize}}
\newcommand{\mc}[1]{\mathcal{#1}}
\newcommand{\ms}[1]{\mathscr{#1}}
\newcommand{\mb}[1]{\mathbb{#1}}
\newcommand{\N}{\mathbb{N}}
\newcommand{\R}{\mathbb{R}}
\newcommand{\bpm}{\begin{pmatrix}}
\newcommand{\epm}{\end{pmatrix}}
\newcommand{\bvm}{\begin{vmatrix}}
\newcommand{\evm}{\end{vmatrix}}
\newcommand{\bsm}{\left(\begin{smallmatrix}}
\newcommand{\esm}{\end{smallmatrix}\right)}
\newcommand{\T}{\top}
\newcommand{\ol}[1]{\overline{#1}}
\newcommand{\la}{\langle}
\newcommand{\ra}{\rangle}
\newcommand{\veps}{\varepsilon}
\newcommand{\w}{\omega}
\newcommand{\gdw}{\Leftrightarrow}
\newcommand{\eins}{\mathbb{1}}
\newcommand{\BS}{{\eins_{\mc{S}}}}
\newcommand{\BW}{{\eins_{\mc{W}}}}
\newcommand{\PT}{{\Pi_{0}}}
\newcommand{\RO}{{R}}
\DeclareMathSymbol{\mydiv}{\mathbin}{symbols}{"04}
\DeclareMathOperator{\Diag}{Diag}
\DeclareMathOperator{\ggrad}{grad}
\DeclareMathOperator{\Hess}{Hess}
\DeclareMathOperator{\Egrad}{\partial}
\DeclareMathOperator{\Rgrad}{\ggrad}
\DeclareMathOperator{\Exp}{Exp}
\DeclareMathOperator{\Vol}{Vol}
\title[Continuous-Domain Assignment Flow]{Continuous-Domain Assignment Flows}
\author[F. Savarino and C. Schn\"{o}rr]{%
  F.\ns S\ls A\ls V\ls A\ls R\ls I\ls N\ls O\ns 
  \and
  C.\ns S\ls C\ls H\ls N\ls \"{O}\ls R\ls R 
}
\affiliation{
  Image and Pattern Analysis Group, Heidelberg University, Heidelberg, Germany\\
  email\textup{\nocorr: \texttt{fabrizio.savarino@iwr.uni-heidelberg.de, schnoerr@math.uni-heidelberg.de}}\\
  URL\textup{\nocorr: \texttt{https://ipa.math.uni-heidelberg.de}}
}
\begin{document}

\label{firstpage}
\maketitle

\vspace{-0.25cm}
\begin{abstract}
Assignment flows denote a class of dynamical models for contextual data labeling (classification) on graphs. We derive a novel parametrization of assignment flows that reveals how the underlying information geometry induces two processes for assignment regularization and for gradually enforcing unambiguous decisions, respectively, that seamlessly interact when solving for the flow. Our result enables  to characterize the dominant part of the assignment flow as a Riemannian gradient flow with respect to the underlying information geometry. We consider a continuous-domain formulation of the corresponding potential and develop a novel algorithm in terms of solving a sequence of linear elliptic PDEs subject to a simple convex constraint. Our result provides a basis for addressing learning problems by controlling such PDEs in future work.
\end{abstract}

\begin{keywords}
  image labeling, image segmentation, information geometry, replicator equation, evolutionary dynamics, assignment flow.
\end{keywords}

\maketitle
\tableofcontents


\section{Introduction}

Deep networks are omnipresent in many disciplines due to their unprecedented predictive power and the availability of software for training that is easy to use. However, this rapid development during recent years has not improved our mathematical understanding in the same way, so far \cite{Elad:2017aa}. The `black box' behaviour of deep networks and systematic failures \cite{Antun:2019aa}, the lack of performance guarantees and reproducibility of results, raises doubts if a purely data-driven approach can deliver the high expectations of some of its most passionate proponents \cite{Coveney:2016aa}. `Mathematics of deep networks', therefore, has become a focal point of research.

Initiated maybe by \cite{He:2016aa} and mathematically substantiated and promoted by, e.g.~\cite{Haber:2017aa,E:2017aa}, attempts to understand deep network architectures as discretized realizations of dynamical systems has become a fruitful line of research. Adopting this viewpoint, we introduced a dynamical system -- called \textit{assignent flow} -- for contextual data classification and image labeling on graphs \cite{Astrom:2017ac}. We refer to \cite{Schnorr:2019aa} for a review of recent work including parameter estimation (learning) \cite{Huhnerbein:2019aa}, adaption of data prototypes during assignment \cite{Zern:2019aa}, and learning prototypes from low-rank data representations and self-assignment \cite{Zisler:2019aa}.

Two key properties of the assignment flow are \textit{smoothness} and \textit{gradual enforcement} of unambigious classification in a \textit{single} process, solely induced by adopting an elementary statistical manifold as state space that is natural for classification tasks, and the corresponding information geometry \cite{Amari:2000aa}. This differs from traditional \textit{variational} approaches to image labeling \cite{Lellmann:2011aa,Chambolle:2012aa} that enjoy convexity but are inherently nonsmooth and require postprocessing to achieve unambigous decisions. We regard nonsmoothness as a major barrier to the design of hierarchical architectures for data classification.

The assignment flow combines by \textit{composition} (rather than by addition) separate local processes at each vertex of the underlying graph and nonlocal regularization. Each local process for label assignment is governed by an ODE, the \textit{replicator equation} \cite{Hofbauer:2003aa,Sandholm:2010aa}, whereas regularization is accomplished by nonlocal \textit{geometric} averaging of the evolving assignments. It is well known \cite{Hofbauer:2003aa} that if the affinity measure which defines the replicator equation and hence governs label selection can be derived as gradient of a potential, then the replicator equation is just the corresponding Riemannian gradient flow induced by the Fisher-Rao metric. The geometric regularization of assignments performed by the assignment flow yields an affinity measure for which the (non-)existence of a corresponding potential is not immediate, however.

\vspace{0.25cm}
\textbf{Contribution and Organization.} The objective of this paper is to clarify this situation. After collecting background material in Section \ref{sec:Preliminaries}, we prove that no potential exists that enables to characterize the assignment flow as Riemannian gradient flow (Section \ref{sec:NovelRepAF:non-potential-flow}). Next, we provide a novel parametrization of the assignment flow by separating a dominant component of the flow, called \textit{$S$-flow}, that completely determines the remaining component and hence essentially characterizes the assignment flow (Section \ref{sec:NovelRepAF:S-parametrization}). The $S$-flow \textit{does} correspond to a potential, under an additional symmetry assumption with respect to the weights that parametrize the regularization properties of the assignment flow through (weighted) geometric averaging. This potential can be decomposed into two components that make explicit the two interacting processes mentioned above: regularization of label assignments and gradually enforcing unambigous decisions. We point out again that this is a direct consequence of the `spherical geometry' (positive curvature) underlying the assignment flow.

Based on this result, we consider the corresponding \textit{continuous-domain variational} formulation in Section \ref{sec:zero-scale-functional}. We prove well-posedness which is not immediate due to nonconvexity, and we propose an algorithm that computes a locally optimal assignment by solving a sequence of simple \textit{linear} PDEs, with changing right-hand side and subject to a simple convex constraint. A numerical example demonstrates that our PDE-based approach reproduces results obtained with solving the original formulation of the assignment flow using completely different numerical techniques \cite{Zeilmann:2018aa}. We hope that the \textit{simplicity} of our PDE-approach and the direct connection to a smooth geometric setting will stimulate future work on learning, from an optimal control point-of-view \cite{E:2019aa,Liu:2019ab}. We conclude by a formal derivation of a PDE that characterizes global minimizers of the nonconvex objective function (Section \ref{sec:VI-PDE}) and by outlining future research in Section \ref{sec:Conclusion}. 


\section{Preliminaries}\label{sec:Preliminaries}
\subsection{Basic Notation}

We denote the standard basis of $\R^n$ by
\begin{equation}\label{eq:def-e1-en}
\mc{B}_{n} := \{e_1, \ldots, e_n\}.
\end{equation}
$|\cdot|$ applied to a finite set denotes its cardinality, i.e.~$|\mc{B}_{n}|=n$.
We set $[n]=\{1,2,\dotsc,n\}$ for $n \in \N$ and $\eins_{n} = (1,1,\dotsc,1)^{\T} \in \R^{n}$. The symbols
\begin{equation}
\mc{I}=[n],\qquad \mc{J}=[c],\qquad n,c\in \N
\end{equation}
will specifically index data points and classes (labels), respectively. $\|\cdot\|$ denotes the Euclidean vector norm and the Frobenius matrix norm induced by the inner product $\|A\|=\la A,A \ra^{1/2} = \tr(A^{\T} A)^{1/2}$. All other norms will be indicated by a corresponding subscript. For a given matrix $A \in \R^{n \times c}$, $A_{i},\,i \in [n]$ denote the row vectors, $A^{j},\,j \in [c]$ denote the column vectors, and $A^{\T} \in \R^{c \times n}$ the transpose matrix. $\mb{S}_{+}^{n}$ denotes the set of all symmetric $n \times n$ matrices with nonnegative entries.
\begin{equation}\label{eq:def-Delta-n}
\Delta_{n} = \{p \in \R_{+}^{n} \colon \la\eins_{n},p\ra=1\}
\end{equation}
denotes the probability simplex. There will be no danger to confuse it with the Laplacian differential operator $\Delta$ that we use without subscript. For strictly positive vectors $p > 0$, we efficiently denote componentwise subdivision by $\frac{v}{p}$. Likewise, we set $p v = (p_{1} v_{1},\dotsc, p_{n} v_{n})^{\T}$. The exponential \textit{function} applies componentwise to vectors (and similarly for $\log$) and will always be denoted by $e^{v}=(e^{v_{1}},\dotsc,e^{v_{n}})^{\T}$, in order not to confuse it with the exponential \textit{maps} \eqref{eq:schnoerr-eq:exp-maps}.

Strong and weak convergence of a sequence $(f_{n})$ is written as $f_{n} \to f$ and $f_{n} \rightharpoonup f$, respectively. $\psi_{S}$ denotes the indicator function of some set $S$: $\psi_{S}(i) = 1$ if $i \in S$ and $\psi_{S}(i) = 0$ otherwise. $\delta_{C}$ denotes the indicator function from the optimization point of view: $\delta_{C}(f) = 0$ if $f \in C$ and $\delta_{C}(f) = +\infty$ otherwise.

\subsection{Assignment Manifold and Flow}
\label{sec:Preliminaries-AF}
We sketch the assignment flow as introduced by \cite{Astrom:2017ac} and refer to the recent survey \cite{Schnorr:2019aa} for more background and a review of recent related work.

\subsubsection{Assignment Manifold}\label{sec:assignmentManifold}
Let $(\mc{F},d_{\mc{F}})$ be a metric space and
\begin{equation}\label{eq:def-mcF-n}
\mc{F}_{n} = \{f_{i} \in \mc{F} \colon i \in \mc{I}\},\qquad |\mc{I}|=n.
\end{equation}
given data. Assume that a predefined set of prototypes
\begin{equation}\label{eq:def-mcF-ast}
\mc{F}_{\ast} = \{f^{\ast}_{j} \in \mc{F} \colon j \in \mc{J}\},\qquad |\mc{J}|=c.
\end{equation}
is given. \textit{Data labeling} denotes the assignments
\begin{equation}\label{eq:fj-fi}
j \to i,\qquad f_{j}^{\ast} \to f_{i}
\end{equation}
of a single prototype $f_{j}^{\ast} \in \mc{F}_{\ast}$ to each data point $f_{i} \in \mc{F}_{n}$.
The set $\mc{I}$ is assumed to form the vertex set of an  undirected graph $\mc{G}=(\mc{I},\mc{E})$ which defines a relation $\mc{E} \subset \mc{I} \times \mc{I}$ and neighborhoods
\begin{equation}\label{eq:def-Ni}
\mc{N}_{i} = \{k \in \mc{I} \colon ik \in \mc{E}\} \cup \{i\},
\end{equation}
where $ik$ is a shorthand for the unordered pair (edge) $(i,k)=(k,i)$. We require these neighborhoods to satisfy the relations
\begin{equation}\label{eq:Ni-Nk}
k \in \mc{N}_{i} \quad\gdw\quad i \in \mc{N}_{k},\qquad\forall i,k \in \mc{I}.
\end{equation}

The assignments (labeling) \eqref{eq:fj-fi} are represented by matrices in the set
\begin{equation}\label{eq:def-W-ast}
\mc{W}_{\ast} = \{W \in \{0,1\}^{n \times c} \colon W\eins_{c}=\eins_{n}\}
\end{equation}
with unit vectors $W_{i},\,i \in \mc{I}$, called \textit{assignment vectors}, as row vectors. These assignment vectors are computed by numerically integrating the assignment flow below \eqref{eq:assignment-flow}, in the following elementary geometric setting. The integrality constraint of \eqref{eq:def-W-ast} is relaxed and vectors
\begin{equation}\label{eq:def-Wi}
W_{i} = (W_{i1},\dotsc,W_{ic})^{\T} \in \mc{S},\quad i \in \mc{I},
\end{equation}
that we still call \textit{assignment vectors},
are considered
on the elementary Riemannian manifold
\begin{equation}\label{eq:def-S}
(\mc{S},g),\qquad
\mc{S} = \{p \in \Delta_{c} \colon p > 0\}
\end{equation}
with
\begin{equation}\label{eq:barycenter-S}
\eins_{\mc{S}} = \frac{1}{c}\eins_{c} \in \mc{S},
\qquad(\textit{barycenter})
\end{equation}
tangent space
\begin{equation}\label{eq:def-T0}
T_{0}
= \{v \in \R^{c} \colon \la\eins_{c},v \ra=0\}
\end{equation}
and tangent bundle $T\mc{S} = \mc{S} \times T_{0}$,
orthogonal projection
\begin{equation}\label{eq:def-Pi0}
\Pi_{0} \colon \R^{c} \to T_{0},\qquad
\Pi_{0} = I - \eins_{\mc{S}}\eins^{\T}
\end{equation}
and the Fisher-Rao metric
\begin{equation}\label{schnoerr-eq:FR-metric-S}
g_{p}(u,v) = \sum_{j \in \mc{J}} \frac{u^{j} {v}^{j}}{p^{j}},\quad p \in \mc{S},\quad
u,v \in T_{0}.
\end{equation}
Based on the linear map
\begin{equation}\label{eq:def-Rp}
R_{p} \colon \R^{c} \to T_{0},\qquad
R_{p} = \Diag(p)-p p^{\T},\qquad p \in \mc{S}
\end{equation}
satisfying
\begin{equation}\label{eq:Rp-Pi0}
R_{p} = R_{p} \Pi_{0} = \Pi_{0} R_{p},
\end{equation}
exponential maps and their inverses are defined as
\begin{subequations}\label{eq:schnoerr-eq:exp-maps}
\begin{align}
\Exp &\colon \mc{S} \times T_{0} \to \mc{S}, &
(p,v) &\mapsto
\Exp_{p}(v) = \frac{p e^{\frac{v}{p}}}{\la p,e^{\frac{v}{p}}\ra},
\label{eq:Exp0} \\ \label{eq:IExp0}
\Exp_{p}^{-1} &\colon \mc{S} \to T_{0}, &
q &\mapsto  \Exp_{p}^{-1}(q) = R_{p} \log\frac{q}{p},
\\
\exp_{p} &\colon T_{0} \to \mc{S}, &
\exp_{p} &= \Exp_{p} \circ R_{p},
\label{schnoerr-eq:def-exp-p} \\ \label{schnoerr-eq:def-exp-p-inverse}
\exp_{p}^{-1} &\colon \mc{S} \to T_{0}, &
\exp_{p}^{-1}(q) &= \Pi_{0}\log\frac{q}{p}.
\end{align}
\end{subequations}
Applying the map $\exp_{p}$ to a vector in $\R^{c} = T_{0} \oplus \R\eins$ does not depend on the constant component of the argument, due to \eqref{eq:Rp-Pi0}.

\begin{remark}\label{rem:Exp-map}
The map $\Exp$ corresponds to the e-connection of information geometry \cite{Amari:2000aa}, rather than to the exponential map of the Riemannian connection. Accordingly, the affine geodesics \eqref{eq:Exp0} are not length-minimizing. But they provide an close approximation \cite[Prop.~3]{Astrom:2017ac} and are more convenient for numerical computations.
\end{remark}

The \textit{assignment manifold} is defined as
\begin{equation}\label{schnoerr-eq:def-mcW}
(\mc{W},g),\qquad \mc{W} = \mc{S} \times\dotsb\times \mc{S}.\qquad (n = |\mc{I}|\;\text{factors})
\end{equation}
We identify $\mc{W}$ with the embedding into $\R^{n\times c}$
\begin{equation}\label{eq:mcW-matrix-embed}
  \mc{W} = \{ W \in \R^{n\times c} \colon W\eins_c = \eins_n\text{ and } W_{ij} > 0 \text{ for all } i\in[n], j\in [c]\}.
\end{equation}
Thus, points $W \in \mc{W}$ are row-stochastic matrices $W \in \R^{n \times c}$ with row vectors $W_{i} \in \mc{S},\; i \in \mc{I}$ that represent the assignments \eqref{eq:fj-fi} for every $i \in \mc{I}$. We set
\begin{equation}\label{schnoerr-eq:TmcW}
\mc{T}_{0} := T_{0} \times\dotsb\times T_{0}
\qquad (n = |\mc{I}|\;\text{factors}).
\end{equation}
Due to \eqref{eq:mcW-matrix-embed}, the tangent space $\mc{T}_0$ can be identified with
\begin{equation}\label{eq:mcT-matrix-embed}
  \mc{T}_0 = \{ V \in \R^{n\times c} \colon V\eins_c = 0\}.
\end{equation}
Thus, $V_i \in T_{0}$ for all row vectors of $V \in \R^{n \times c}$ and $i \in \mc{I}$. All mappings defined above factorize in a natural way and apply row-wise, e.g.~$\Exp_{W} = (\Exp_{W_{1}},\dotsc,\Exp_{W_{n}})$ etc.

\subsubsection{Assignment Flow}

Based on \eqref{eq:def-mcF-n} and \eqref{eq:def-mcF-ast}, the distance vector field
\begin{equation}\label{eq:def-distance-vector}
D_{\mc{F};i} = \big(d_{\mc{F}}(f_{i},f_{1}^{\ast}),\dotsc,d_{\mc{F}}(f_{i},f_{c}^{\ast})\big)^{\T},\qquad i \in \mc{I}
\end{equation}
is well-defined. These vectors are collected as row vectors of the \textit{distance matrix}
\begin{equation}\label{eq:def-distance-matrix}
D_{\mc{F}} \in \mb{S}_{+}^{n}.
\end{equation}
The \textit{likelihood map} and the \textit{likelihood vectors}, respectively, are defined as
\begin{equation}\label{schnoerr-eq:def-Li}
L_{i} \colon \mc{S} \to \mc{S},\qquad
L_{i}(W_{i})
= \exp_{W_{i}}\Big(-\frac{1}{\rho}D_{\mc{F};i}\Big)
= \frac{W_{i} e^{-\frac{1}{\rho} D_{\mc{F};i}}}{\la W_{i},e^{-\frac{1}{\rho} D_{\mc{F};i}} \ra},\qquad i \in \mc{I},
\end{equation}
where the scaling parameter $\rho > 0$ is used for normalizing the a-prior unknown scale of the components of $D_{\mc{F};i}$ that depends on the specific application at hand.

A key component of the assignment flow is the interaction of the likelihood vectors through \textit{geometric} averaging within the local neighborhoods \eqref{eq:def-Ni}. Specifically, using  weights
\begin{equation}\label{eq:weights-Omega-i}
  \omega_{ik} > 0\quad \text{for all}\; k \in \mc{N}_{i},\;i \in \mc{I}\quad\text{with}\quad \sum_{k \in \mc{N}_{i}} w_{ik}=1,
\end{equation}
the \textit{similarity map} and the \textit{similarity vectors}, respectively, are defined as
\begin{equation}\label{eq:def-Si}
S_{i} \colon \mc{W} \to \mc{S},\qquad
S_{i}(W) = \Exp_{W_{i}}\Big(\sum_{k \in \mc{N}_{i}} w_{ik} \Exp_{W_{i}}^{-1}\big(L_{k}(W_{k})\big)\Big),\qquad i \in \mc{I}.
\end{equation}
If $\Exp_{W_{i}}$ were the exponential map of the Riemannian (Levi-Civita) connection, then the argument inside the brackets of the right-hand side would just be the negative Riemannian gradient with respect to $W_{i}$ of the center of mass objective function comprising the points $L_{k},\,k \in \mc{N}_{i}$, i.e.~the weighted sum of the squared Riemannian distances between $W_{i}$ and  $L_{k}$   \cite[Lemma 6.9.4]{Jost:2017aa}. In view of Remark \ref{rem:Exp-map}, this interpretation is only approximately true mathematically, but still correct informally: $S_{i}(W)$ moves $W_{i}$ towards the geometric mean of the likelihood vectors $L_{k},\,k \in \mc{N}_{i}$. Since $\Exp_{W_{i}}(0)=W_{i}$, this mean is equal to $W_{i}$ if the aforementioned gradient vanishes.

The \textit{assignment flow} is induced by the locally coupled system of nonlinear ODEs
\begin{subequations}\label{eq:assignment-flow}
\begin{align}
\dot W &= R_{W}S(W),\qquad W(0)=\eins_{\mc{W}},
\label{eq:assignment-flow-a}
\\
\label{eq:assignment-flow-b}
\dot W_{i} &= R_{W_{i}} S_{i}(W),\qquad W_{i}(0)=\eins_{\mc{S}},\quad i \in \mc{I},
\end{align}
\end{subequations}
where $\eins_{\mc{W}} \in \mc{W}$ denotes the barycenter of the assignment manifold \eqref{schnoerr-eq:def-mcW}. The solution curve $W(t)\in\mc{W}$ is numerically computed by geometric integration \cite{Zeilmann:2018aa} and determines a labeling $W(T) \in \mc{W}_{\ast}$ for sufficiently large $T$ after a trivial rounding operation.

\subsection{Functional Analysis}
We record background material that will be used in Section \ref{sec:zero-scale-functional}.
\subsubsection{Sobolev Spaces}
\label{sec:Preliminaries-Sobolev}

We list few basic definitions and fix the corresponding notation \cite{Ziemer:1989aa,Attouch:2014aa}. Throughout this section $\Omega \subset \R^{d}$ denotes an open bounded domain.

We denote the inner product and the norm of functions $f,g \in L^{2}(\Omega)$ by
\begin{equation}
(f,g)_{\Omega} = \int_{\Omega} f g dx,\qquad
\|f\|_{\Omega} = (f,f)_{\Omega}^{1/2}.
\end{equation}
Functions $f_{1}$ and $f_{2}$ are equivalent and identified whenever they merely differ pointwise on a Lebesque-negligible set of measure zero. $f_{1}$ and $f_{2}$ then are said to be equal a.e. (almost everywhere). $H^{1}(\Omega)=W^{1,2}(\Omega)$ denotes the Sobolev space of functions $f$ with square-integrable weak derivatives $D^{\alpha} f$ up to order one. $H^{1}(\Omega)$ is a Hilbert space with inner product and norm denoted by
\begin{equation}\label{eq:f-1-norm-scalar} (f,g)_{1;\Omega} = \sum_{|\alpha|\leq 1} (D^{\alpha} f,D^{\alpha} g)_{\Omega},\qquad
\|f\|_{1;\Omega} = \Big(\sum_{|\alpha|\leq 1}\|D^{\alpha} f\|_{\Omega}^{2}\Big)^{1/2}.
\end{equation}
\begin{lemma}[{\cite[Cor.~2.1.9]{Ziemer:1989aa}}]\label{lem:Du-0-constant}
If $\Omega$ is connected, $u \in H^{1}(\Omega)$ and $Du=0$ a.e.~on $\Omega$, then $u$ is equivalent to a constant function on $\Omega$.
\end{lemma}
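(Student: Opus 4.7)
The plan is to prove this via mollification on balls together with a connectedness argument, which is the standard route for results of this flavor. The claim is essentially local combined with the global topological fact that $\Omega$ is connected.

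First I would establish the following local claim: for every open ball $B \subset \Omega$ there is a constant $c_B \in \R$ such that $u = c_B$ a.e.\ on $B$. To do this, fix $B = B_{r}(x_0) \Subset \Omega$, let $\eta_\veps$ be a standard smooth mollifier supported in $B_{\veps}(0)$, and set $u_\veps := u * \eta_\veps$ on $B_{r-\veps}(x_0)$ for $\veps < r$. Standard properties of mollification yield $u_\veps \in C^\infty(B_{r-\veps}(x_0))$ and
\begin{equation}
  D u_\veps \;=\; (Du) * \eta_\veps \;=\; 0 \quad\text{on } B_{r-\veps}(x_0),
\end{equation}
since $Du = 0$ a.e.\ in $\Omega$. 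Because $B_{r-\veps}(x_0)$ is (classically) connected, $u_\veps$ equals a constant $c_\veps$ on it. Passing to the limit $\veps \to 0$, we have $u_\veps \to u$ in $L^2_{\mathrm{loc}}(B)$, so the numerical sequence $c_\veps$ is Cauchy and converges to some $c_B$ with $u = c_B$ a.e.\ on $B_{r-\veps}(x_0)$ for every $\veps > 0$, hence a.e.\ on $B$.

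Next I would globalize using the hypothesis that $\Omega$ is connected. Cover $\Omega$ by open balls contained in $\Omega$. On each such ball the previous step assigns a constant value $c_B$, and on the intersection of two overlapping balls $B, B'$ the two constants must coincide, because both equal $u$ a.e.\ on the nonempty open set $B \cap B'$. Thus the assignment $B \mapsto c_B$ is locally constant in an obvious sense, and by a standard chain-of-balls argument (any two points of $\Omega$ are joined by a finite chain of overlapping balls contained in $\Omega$, since $\Omega$ is path-connected as an open connected subset of $\R^d$), all these constants agree with a common value $c \in \R$. Consequently $u = c$ a.e.\ on $\Omega$, which is precisely the claim that $u$ is equivalent to a constant function.

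I do not expect any substantial obstacle; the only delicate points are bookkeeping items. The main subtlety is that the shrunken set $\Omega_\veps := \{x \in \Omega : \dist(x,\partial\Omega) > \veps\}$ on which $u_\veps$ is defined globally need not itself be connected even when $\Omega$ is, which is why I prefer to run the mollification argument ball-by-ball and only afterwards invoke connectedness of $\Omega$ through the chain-of-balls step. The other minor point to handle carefully is the pointwise-versus-a.e.\ distinction, which is already built into the statement by the word ``equivalent.''
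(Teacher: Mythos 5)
Your proof is correct. The paper does not prove this lemma at all --- it is quoted verbatim from the cited reference (Ziemer, Cor.~2.1.9) --- and your mollification-on-balls argument ($Du_{\veps}=(Du)*\eta_{\veps}=0$, hence $u_{\veps}$ locally constant, pass to the limit, then globalize by a chain of overlapping balls using connectedness of the open set $\Omega$) is precisely the standard proof of that corollary, with the one genuine pitfall (the shrunken domain $\Omega_{\veps}$ possibly being disconnected) correctly identified and avoided.
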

The closure in $H^{1}(\Omega)$ of the set of test functions $C^{\infty}_{c}(\Omega)$ that are compactly supported on $\Omega$, is the Sobolev space
\begin{equation}
H_{0}^{1}(\Omega) = \ol{C^{\infty}_{c}(\Omega)} \subset H^{1}(\Omega).
\end{equation}
It contains all functions in $H^{1}(\Omega)$ whose boundary values on $\partial\Omega$ (in the sense of traces) vanish. The space $H^{1}(\Omega;\R^{c}),\, 2 \leq c \in \N$ contains vector-valued functions $f$ whose component functions $f_{i},\, i \in [c]$ are in $H^{1}(\Omega)$. For notational efficiency, we denote the norm of $f \in H^{1}(\Omega;\R^{c})$ by
\begin{equation}
\|f\|_{1;\Omega} = \Big(\sum_{i \in [c]}\|f_{i}\|_{1;\Omega}^{2}\Big)^{1/2}
\end{equation}
as in the scalar case \eqref{eq:f-1-norm-scalar}.
It will be clear from the context if $f$ is scalar- or vector-valued.

The compactness theorem of Rellich-Kondrakov \cite[Thm.~5.3.3]{Attouch:2014aa} says that the canonical embedding
\begin{equation}
H_{0}^{1}(\Omega) \hookrightarrow L^{2}(\Omega)
\end{equation}
is compact, i.e.~every bounded subset in $H_{0}^{1}(\Omega)$ is relatively compact in $L^{2}(\Omega)$. This extends to the vector-valued case
\begin{equation}\label{eq:compact-embedding}
H_{0}^{1}(\Omega;\R^{c}) \hookrightarrow L^{2}(\Omega;\R^{c})
\end{equation}
since $H_{0}^{1}(\Omega;\R^{c})$ is isomorphic to $H_{0}^{1}(\Omega) \times \dotsb \times H_{0}^{1}(\Omega)$ and likewise for $L^{2}(\Omega;\R^{c})$. The dual space of $H_{0}^{1}(\Omega)$ is commonly denoted by $H^{-1}(\Omega)=\big(H_{0}^{1}(\Omega)\big)$. Accordingly, we set $H^{-1}(\Omega;\R^{c})=\big(H_{0}^{1}(\Omega;\R^{c})\big)'$.

\subsubsection{Weak Convergence Properties, Variational Inequalities}
\label{sec:Preliminaries-Existence-Minimizers}

We list few further basic facts \cite[Prop.~38.2]{Zeidler:1985aa}, \cite[Prop.~2.4.6]{Attouch:2014aa}.
\begin{proposition}\label{prop:properties-Banach}
The following assertions hold in a Banach space $X$.
\begin{enumerate}[(a)]
\item A closed convex subset $C \subset X$ is weakly closed, i.e.~a sequence $(f_{n})_{n \in \N} \subset C$ that weakly converges to $f$ implies $f \in C$.
\item If $X$ is reflexive (in particular, if $X$ is a Hilbert space), then every bounded sequence in $X$ has a weakly convergent subsequence.
\item If $f_{n}$ weakly converges to $f$, then $(f_{n})_{n \in \N}$ is bounded and
\begin{equation}
\|f\|_{X} \leq \liminf_{n\to\infty}\|f_{n}\|_{X}.
\end{equation}
\end{enumerate}
\end{proposition}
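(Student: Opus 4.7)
My plan is to treat each of the three assertions separately using standard tools from functional analysis.

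For part (a), the plan is to argue by contradiction using the Hahn--Banach separation theorem. Assume $(f_n) \subset C$ converges weakly to $f$ and suppose $f \notin C$. Since $C$ is closed and convex and $\{f\}$ is compact, there exist $\ell \in X'$ and $\alpha \in \R$ strictly separating them, i.e.\ $\ell(g) \leq \alpha < \ell(f)$ for all $g \in C$. Applying $\ell$ to the sequence gives $\ell(f_n) \leq \alpha$ for every $n$, yet $\ell(f_n) \to \ell(f) > \alpha$ by the very definition of weak convergence, a contradiction. (An equivalent route would be to invoke Mazur's theorem: a suitable sequence of convex combinations of the $f_n$ lies in $C$ and converges strongly to $f$, so $f \in \ol{C} = C$.)

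For part (b), the plan is to combine reflexivity with the Banach--Alaoglu theorem and, crucially, the Eberlein--\v Smulian theorem. Since $(f_n)$ is bounded, it is contained in some closed ball $B_R \subset X$. Under the canonical embedding $J\colon X \to X''$, reflexivity means $J(B_R)$ equals the corresponding closed ball of $X''$, which is weak-$\ast$ compact by Banach--Alaoglu. Transporting this back, $B_R$ is weakly compact in $X$. Eberlein--\v Smulian then upgrades weak compactness to weak \emph{sequential} compactness, yielding a weakly convergent subsequence $f_{n_k} \rightharpoonup f \in B_R$. The key non-trivial step here, and the main technical obstacle of the whole proposition, is Eberlein--\v Smulian, which I would simply cite.

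For part (c), boundedness follows from the uniform boundedness principle applied in the bidual. For each $\ell \in X'$, weak convergence implies $\ell(f_n) \to \ell(f)$, so in particular the scalar sequence $\bigl(\ell(f_n)\bigr)_{n \in \N}$ is bounded. Viewing each $f_n$ as an element of $X''$ via the canonical embedding, the family $\{J(f_n)\}_{n \in \N}$ is thus pointwise bounded on $X'$. Banach--Steinhaus then yields $\sup_n \|J(f_n)\|_{X''} = \sup_n \|f_n\|_X < \infty$. For the lower semicontinuity of the norm, I would apply Hahn--Banach to choose $\ell \in X'$ with $\|\ell\|_{X'} = 1$ and $\ell(f) = \|f\|_X$. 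Then
\begin{equation}
  \|f\|_X = \ell(f) = \lim_{n \to \infty} \ell(f_n) = \liminf_{n \to \infty} \ell(f_n) \leq \liminf_{n \to \infty} \|\ell\|_{X'}\,\|f_n\|_X = \liminf_{n \to \infty} \|f_n\|_X,
\end{equation}
which completes the proof. All three parts are classical; since the statement cites \cite{Zeidler:1985aa} and \cite{Attouch:2014aa}, the proof really reduces to collecting the appropriate citations, with Eberlein--\v Smulian being the only genuinely deep ingredient.
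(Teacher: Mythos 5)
Your proof is correct: part (a) via Hahn--Banach separation (or Mazur), part (b) via reflexivity, Banach--Alaoglu and Eberlein--\v{S}mulian, and part (c) via Banach--Steinhaus in the bidual plus a norming functional are exactly the standard arguments. The paper itself gives no proof of this proposition --- it only cites \cite[Prop.~38.2]{Zeidler:1985aa} and \cite[Prop.~2.4.6]{Attouch:2014aa} --- and your write-up simply spells out what those references contain, so there is nothing to compare beyond noting that you correctly identified Eberlein--\v{S}mulian as the one genuinely deep ingredient.
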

The following theorem states conditions for minimizers of the functional to satisfy a corresponding variational inequality.
\begin{theorem}[{\cite[Thm.~46.A(a)]{Zeidler:1985aa}}]\label{thm:VI-necessarily}
Let $F \colon C \to \R$ be a functional on the convex nonempty set $C$ of a real locally convex space $X$, and let $b \in X'$ be a given element. Suppose the Gateaux-derivative $F'$ exists on $C$. Then any solution $f$ of
\begin{equation}
\min_{f \in C}\big\{F(f)-\la b, f \ra_{X'\times X}\big\},
\end{equation}
satisfies the variational inequality
\begin{equation}
\la F'(f)-b,h-f \ra_{X'\times X} \geq 0,\qquad\text{for all}\; h \in C.
\end{equation}
\end{theorem}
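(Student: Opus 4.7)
The plan is to carry out the standard first-order necessary condition argument, exploiting convexity of $C$ to get admissible perturbations in every direction $h-f$, and then passing to the limit along the Gateaux difference quotient.

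First I would fix a minimizer $f \in C$ and an arbitrary comparison point $h \in C$. Since $C$ is convex, the segment $f_{t} := f + t(h-f) = (1-t)f + t h$ lies in $C$ for every $t \in [0,1]$. Introducing the shorthand $\Phi(g) := F(g) - \la b, g\ra_{X'\times X}$, optimality of $f$ gives
\begin{equation}
\Phi(f_{t}) - \Phi(f) \geq 0 \qquad \text{for all } t \in [0,1].
\end{equation}
Because $b \in X'$ is linear and continuous, $\la b,\cdot\ra_{X'\times X}$ is Gateaux-differentiable on $X$ with derivative $b$, so $\Phi$ inherits Gateaux-differentiability on $C$ from $F$, with $\Phi'(f) = F'(f) - b$.

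Next I would divide the optimality inequality by $t > 0$ and let $t \downarrow 0$. By the definition of the Gateaux derivative applied in direction $h-f$,
\begin{equation}
\lim_{t \downarrow 0} \frac{\Phi(f + t(h-f)) - \Phi(f)}{t} = \la \Phi'(f), h-f \ra_{X'\times X} = \la F'(f) - b, h-f \ra_{X'\times X}.
\end{equation}
Since each difference quotient is nonnegative, the limit is nonnegative, which is exactly the claimed variational inequality. As $h \in C$ was arbitrary, the inequality holds for all $h \in C$.

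There is essentially no obstacle here beyond bookkeeping: the convexity of $C$ supplies admissible one-sided variations at the \emph{boundary} of $C$ (so no interior assumption on $f$ is needed), and the Gateaux-differentiability hypothesis is invoked only along the single direction $h-f$, which is legitimate since $f_{t} \in C$ for all sufficiently small $t>0$. No continuity or convexity of $F$ is required for the necessary condition, and the topology of $X$ enters only through the definition of $F'$ and the duality pairing.
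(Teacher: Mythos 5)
Your proof is correct and is the standard one-sided difference-quotient argument; the paper itself states this result as a citation of Zeidler without reproducing a proof, and your argument is exactly the classical derivation given there. The one subtlety you correctly handle is that the Gateaux derivative need only be evaluated along the admissible directions $h-f$ supplied by convexity of $C$, so no interior-point assumption is needed.
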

%


\section{A Novel Representation of the Assignment Flow}

Let $J \colon \mc{W} \to \R$ be a smooth function on the assignment manifold \eqref{schnoerr-eq:def-mcW} and denote the Riemannian gradient of $J$ at $W \in\mc{W}$ induced by the Fisher-Rao metric \eqref{schnoerr-eq:FR-metric-S} by $\Rgrad J(W) \in \mc{T}_0$. In view of the embedding \eqref{eq:mcW-matrix-embed}, we can also compute the Euclidean gradient of $J$ denoted by $\Egrad J(W) \in \R^{n\times c}$. These two gradients are related by
\cite[Prop.~1]{Astrom:2017ac}
\begin{equation}\label{eq:NovelRepAF:relationRgradEgrad}
  \Rgrad J(W) = \RO_W \Egrad J(W),\qquad W \in \mc{W},
\end{equation}
where $\RO_{W} \colon \R^{n\times c} \to \mc{T}_0$ is the product map obtained by applying $\RO_{W_i}$ from \eqref{eq:def-Rp} to every row vector indexed by $i \in \mc{I}$. This relation raises the natural question: Is there a potential $J$ such that the assignment flow \eqref{eq:assignment-flow} is a Riemannian gradient descent flow with respect to $J$, i.e.~does 
$\RO_W S(W) = -\Rgrad J(W)$ hold?

We next show that such a potential does not exist in general (Section \ref{sec:NovelRepAF:non-potential-flow}). However, in Section~\ref{sec:NovelRepAF:S-parametrization}, we derive a novel representation by decoupling the assignment flow into two separate flows, where one flow steers the other and in this sense dominates the assignment flow. Under the additional assumption that the weights $\omega_{ij}$ of the similarity map $S(W)$ in \eqref{eq:def-Si} are symmetric, we show that the dominating flow is a Riemannian gradient flow induced by a potential. This result is the basis for the continuous-domain formulation of the assignment flow studied in the subsequent sections.

\subsection{Non-Potential Flow}\label{sec:NovelRepAF:non-potential-flow}
We next show (Theorem~\ref{thm:nonexistance_of_Potential_for_assignment_flow}) that under some mild assumptions on $D_{\mc{F}}$ \eqref{eq:def-distance-matrix} which are always fulfilled in practice, no potential $J$ exists that induces the assignment flow. In order to prove this result, we first derive some properties of the mapping $\exp$ given by \eqref{schnoerr-eq:def-exp-p} as well as explicit expressions of the differential $dS(W)$ of the similarity map \eqref{eq:def-Si} and its transpose $dS(W)^\T$ with respect to the standard Euclidean structure on $\R^{n\times c}$.

\begin{lemma}\label{lem:prop_exp}
  The following properties hold for $\exp_p$ and its inverse \eqref{schnoerr-eq:def-exp-p}, \eqref{schnoerr-eq:def-exp-p-inverse}.
  \begin{enumerate}
    \item\label{enum:lem:prop_exp:expression_exp_dexp_and_dexpInv}
    For every $p \in \mc{S}$ the map $\exp_p \colon \R^c \to \mc{S}$ can be expressed by
    \begin{equation}\label{eq:lem:prop_exp:exp_explicitFormula}
      v \mapsto \exp_p(v) = \frac{pe^v}{\la p, e^v\ra}.
    \end{equation}
    Its restriction to $T_0$, $\exp_p \colon T_0 \to \mc{S}$, is a diffeomorphism. The differential of $\exp_p$ and $\exp_p^{-1}$ at $v\in T_0$ and $q \in \mc{S}$, respectively, are given by
    \begin{equation}\label{eq:lem:prop_exp:dexp_and_dexpInv}
      d\exp_p(v)[u] = \RO_{\exp_p(v)}[u]\quad \text{and}\quad d\exp_p^{-1}(q)[u] = \PT\Big[\frac{u}{q}\Big] \quad \text{for all } u \in T_0.
    \end{equation}

    \item\label{enum:lem:prop_exp:ExpP_expressed_with_exp}
    Let $p, q \in \mc{S}$. Then $\Exp_p^{-1}(q) = \RO_p \exp_p^{-1}(q)$.

    \item \label{enum:lem:prop_exp:RqIso}
    Let $q \in \mc{S}$. If the linear map $R_q$ from \eqref{eq:def-Rp} is restricted to $T_0$, then $R_q \colon T_0 \to T_0$ is a linear isomorphism with inverse given by $(R_q|_{T_0})^{-1}(u) = \PT \Big[ \frac{u}{q}\Big]$ for all $u\in T_0$.

    \item\label{enum:lem:prop_exp:exp_GroupAction}
    If $\R^c$ is viewed as an abelian group, then $\exp \colon \R^c \times \mc{S} \to \mc{S}$ given by $(v, p) \mapsto \exp_p(v)$ defines a Lie-group action, i.e.
    \begin{equation}\label{eq:lem:prop_exp:groupActionProperties}
      \exp_{p}(v + u) = \exp_{\exp_p(u)}(v)\quad\text{and}\quad \exp_{p}(0) = p \quad \quad \text{for all } v, u \in T_0 \text{ and } p \in \mc{S}.
    \end{equation}
    Furthermore, the following identities follow for all $p, q, a \in \mc{S}$ and $v \in \R^c$
    \begin{subequations}
    \begin{align}
      \exp_{p}(v) &= \exp_{q}\big( v + \exp_{q}^{-1}(p)\big)\label{eq:expP_expressed_with_expQ}\\
      \exp_q^{-1}(p) &= -\exp_p^{-1}(q)\label{eq:expPInv_signFlip}\\
      \exp_q^{-1}(a) &= \exp^{-1}_p(a) - \exp_p^{-1}(q).\label{eq:expQInv_expressed_with_expPInv}
    \end{align}
    \end{subequations}
  \end{enumerate}
\end{lemma}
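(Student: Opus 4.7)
The plan is to prove each item by direct calculation, leveraging the relation $R_p = R_p \Pi_0 = \Pi_0 R_p$ from \eqref{eq:Rp-Pi0} to keep intermediate expressions clean. For part \ref{enum:lem:prop_exp:expression_exp_dexp_and_dexpInv}, I would unfold the composition $\exp_p = \Exp_p \circ R_p$ from \eqref{schnoerr-eq:def-exp-p}. Componentwise, $R_p v / p = v - \la p, v\ra \eins_c$, so substituting this into \eqref{eq:Exp0} makes the constant factor $e^{-\la p, v\ra}$ cancel between numerator and denominator, yielding \eqref{eq:lem:prop_exp:exp_explicitFormula}. For the differential, I would differentiate the quotient formula directly: writing $q = \exp_p(v)$, the two resulting terms rearrange into $\Diag(q)u - q\la q, u\ra = R_q u$. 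For $d\exp_p^{-1}$, I would work instead from \eqref{schnoerr-eq:def-exp-p-inverse}: since $\log p$ is constant in $q$, differentiation of $\Pi_0 \log(q/p)$ in $q$ gives $\Pi_0[u/q]$. The inverse formula \eqref{schnoerr-eq:def-exp-p-inverse} is already a two-sided smooth inverse of $\exp_p|_{T_0}$, so the diffeomorphism claim drops out.

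For parts \ref{enum:lem:prop_exp:ExpP_expressed_with_exp} and \ref{enum:lem:prop_exp:RqIso}, I would combine \eqref{eq:IExp0} with \eqref{schnoerr-eq:def-exp-p-inverse} and invoke $R_p = R_p \Pi_0$ to write $\Exp_p^{-1}(q) = R_p \log(q/p) = R_p \Pi_0 \log(q/p) = R_p \exp_p^{-1}(q)$. For the isomorphism on $T_0$: first observe $\eins_c^\T R_q = q^\T - q^\T = 0$, so $R_q$ maps $T_0$ into $T_0$. To check that the proposed map is an inverse, I would compute $R_q \Pi_0 [u/q]$, using $R_q \eins_c = 0$ to absorb the constant that $\Pi_0$ subtracts, so the composition collapses to $R_q [u/q] = u - q\la q, u/q\ra = u$ whenever $u \in T_0$. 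The reverse composition $\Pi_0[R_q u / q] = \Pi_0 u - \la q, u\ra \Pi_0 \eins_c = \Pi_0 u = u$ is analogous.

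For part \ref{enum:lem:prop_exp:exp_GroupAction}, $\exp_p(0) = p$ is immediate from \eqref{eq:lem:prop_exp:exp_explicitFormula}. Additivity $\exp_{\exp_p(u)}(v) = \exp_p(u+v)$ follows by substituting the explicit formula into itself: the scalar $\la p, e^u\ra$ factors out of both numerator and denominator of the outer expression, leaving $\frac{p e^{u+v}}{\la p, e^{u+v}\ra}$. From additivity the three identities drop out in sequence: \eqref{eq:expP_expressed_with_expQ} by writing $p = \exp_q(\exp_q^{-1}(p))$ and applying the action law; \eqref{eq:expPInv_signFlip} by setting $v = -\exp_q^{-1}(p)$ in \eqref{eq:expP_expressed_with_expQ} to obtain $\exp_p(-\exp_q^{-1}(p)) = \exp_q(0) = q$ and matching with $\exp_p^{-1}(q)$ via the injectivity established in part \ref{enum:lem:prop_exp:expression_exp_dexp_and_dexpInv}; and \eqref{eq:expQInv_expressed_with_expPInv} by swapping $p \leftrightarrow q$ in \eqref{eq:expP_expressed_with_expQ}, substituting $v = \exp_q^{-1}(a)$, and inverting. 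The main subtlety to watch, and the step most prone to sloppy error, is that the group acts by all of $\R^c$ whereas injectivity of $\exp_p$ only holds on $T_0$; every inversion here is legitimate because any expression of the form $\exp_q^{-1}(\cdot)$ already lies in $T_0$ by \eqref{schnoerr-eq:def-exp-p-inverse}, and $T_0$ is closed under the linear combinations appearing in each identity.
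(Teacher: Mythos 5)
Your proposal is correct and follows essentially the same route as the paper: derive the explicit quotient formula for $\exp_p$ by unfolding $\Exp_p\circ R_p$ and cancelling the scalar, compute the differentials directly, verify the group-action law from the explicit formula, and then obtain \eqref{eq:expP_expressed_with_expQ}--\eqref{eq:expQInv_expressed_with_expPInv} in sequence (the paper gets the sign flip by setting $a=q$ in the intermediate identity rather than by evaluating $\exp_p(-\exp_q^{-1}(p))=q$, and proves part \ref{enum:lem:prop_exp:RqIso} by identifying $R_q$ with the differential $d\exp_p(v_q)$ of a diffeomorphism instead of your direct two-sided inverse check, but these are cosmetic differences). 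Your closing remark about why each inversion is legitimate --- the arguments of $\exp_q^{-1}$ always lie in $T_0$, where injectivity holds --- is a point the paper leaves implicit.
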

\begin{proof}
 \eqref{enum:lem:prop_exp:expression_exp_dexp_and_dexpInv}: We have $\Exp_p(v + \lambda p) = \Exp_p(v)$ for every $p\in \mc{S}$, $v \in T_0$ and $\lambda \in \R$, as a simple computation using definition \eqref{eq:Exp0} of $\Exp_p$ directly shows. Therefore, for every $v \in T_0$
  \begin{equation}
    \exp_p(v) = \Exp_p(\RO_p v) = \Exp_p(pv - \la v, p\ra p) = \Exp_p(pv) = \frac{pe^v}{\la p, e^v\ra}.
  \end{equation}
  If we restrict $\exp_p$ to $T_0$, then an inverse is explicitly given by \eqref{schnoerr-eq:def-exp-p}. The differentials \eqref{eq:lem:prop_exp:dexp_and_dexpInv} result from a standard computation.

  \eqref{enum:lem:prop_exp:ExpP_expressed_with_exp}: The formula is a direct consequence of the formulas for $\Exp_p^{-1}$ and $\exp_p^{-1}$ given in \eqref{eq:IExp0} and \eqref{schnoerr-eq:def-exp-p}, together with the fact \eqref{eq:Rp-Pi0}.

  \eqref{enum:lem:prop_exp:RqIso}: Fix any $p \in \mc{S}$ and set $v_q := \exp_p^{-1}(q)$ for $q \in \mc{S}$. Since $\exp_p \colon T_0 \to \mc{S}$ is a diffeomorphism, the differential $d\exp_p(v_q) \colon T_0 \to T_0$ is an isomorphism. By \eqref{eq:lem:prop_exp:dexp_and_dexpInv}, we have $\RO_q[u] = \RO_{\exp_p(v_q)}[u] = d\exp_p(v_q)[u]$ for all $u \in T_0$, showing that $\RO_q$ is an isomorphism with the corresponding inverse.

  \eqref{enum:lem:prop_exp:exp_GroupAction}: Properties \eqref{eq:lem:prop_exp:groupActionProperties} defining the group action are directly verified using \eqref{eq:lem:prop_exp:exp_explicitFormula}. Now, suppose $p, q, a\in \mc{S}$ and $v \in \R^c$ are arbitrary. Since $\exp_q \colon T_0 \to \mc{S}$ is a diffeomorphism, we have $p = \exp_q\big( \exp_q^{-1}(p)\big)$ and by the group action property
  \begin{equation}
    \exp_{p}(v) = \exp_{\exp_{q}\big( \exp_{q}^{-1}(p)\big)}(v) = \exp_{q}\big( v + \exp_{q}^{-1}(p)\big),
  \end{equation}
  which proves \eqref{eq:expP_expressed_with_expQ}. To show \eqref{eq:expPInv_signFlip}, set $v_a := \exp_p^{-1}(a)$ and substitute this vector into \eqref{eq:expP_expressed_with_expQ}. Applying $\exp_q^{-1}$ to both sides then gives
  \begin{equation}\label{eq:expPInv_expressed_with_expQInv_EQ1}
    \exp_q^{-1}(a) = \exp_{q}^{-1}\big( \exp_p(v_a) \big) = v_a + \exp_q^{-1}(p) = \exp^{-1}_p(a) + \exp_q^{-1}(p).
  \end{equation}
  Setting $a = q$ in this equation, we obtain \eqref{eq:expPInv_signFlip} from
  \begin{equation}
    0 = \exp_q^{-1}(q) = \exp_p^{-1}(q) + \exp_q^{-1}(p).
  \end{equation}
  Using $\exp_q^{-1}(p) = -\exp_p^{-1}(q)$ in \eqref{eq:expPInv_expressed_with_expQInv_EQ1} yields \eqref{eq:expQInv_expressed_with_expPInv}.
\end{proof}

\begin{lemma}\label{lem:Si_expressed_by_expBS}
  The $i$-th component of the similarity map $S(W)$ defined by \eqref{eq:def-Si} can equivalently be expressed as
  \begin{equation}
    S_i(W) = \exp_{\BS}\Big( \sum_{j\in \mc{N}_i} \omega_{ij}\Big(\exp_{\BS}^{-1}(W_j) - \frac{1}{\rho} D_{\mc{F};j}\Big)\Big)\quad
    \text{for all } i\in \mc{I} \text{ and } W \in \mc{W}.
  \end{equation}
\end{lemma}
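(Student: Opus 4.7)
The plan is to recenter the geodesic average that defines $S_i(W)$ at the barycenter $\BS$ using the Lie group action from Lemma~\ref{lem:prop_exp}\eqref{enum:lem:prop_exp:exp_GroupAction}. The calculation proceeds in two passes: first I would replace every occurrence of $\Exp$ by the matching $\exp$ so that the group action identities become available, and second I would use \eqref{eq:expP_expressed_with_expQ}--\eqref{eq:expQInv_expressed_with_expPInv} to shift both the base point $W_i$ outside and the base points $W_k$ inside the argument over to $\BS$.

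First, starting from definition \eqref{eq:def-Si} and using Lemma~\ref{lem:prop_exp}\eqref{enum:lem:prop_exp:ExpP_expressed_with_exp}, I would rewrite
\begin{equation*}
  \sum_{k\in \mc{N}_i}\omega_{ik}\Exp_{W_i}^{-1}(L_k(W_k))
  = \RO_{W_i}\sum_{k\in \mc{N}_i}\omega_{ik}\exp_{W_i}^{-1}(L_k(W_k)),
\end{equation*}
using linearity of $\RO_{W_i}$. Since $\exp_p=\Exp_p\circ \RO_p$ by \eqref{schnoerr-eq:def-exp-p}, applying $\Exp_{W_i}$ to this vector collapses the prefactor $\RO_{W_i}$ and yields
\begin{equation*}
  S_i(W) = \exp_{W_i}\Big(\sum_{k\in \mc{N}_i}\omega_{ik}\exp_{W_i}^{-1}(L_k(W_k))\Big).
\end{equation*}

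Next, I would rewrite each likelihood vector $L_k(W_k)=\exp_{W_k}(-\frac{1}{\rho}D_{\mc{F};k})$ as an $\exp$ centered at $\BS$ using \eqref{eq:expP_expressed_with_expQ} with $p=W_k$, $q=\BS$:
\begin{equation*}
  L_k(W_k) = \exp_{\BS}\Big(\exp_{\BS}^{-1}(W_k) - \tfrac{1}{\rho}D_{\mc{F};k}\Big),
\end{equation*}
which makes sense in $\R^c$ since $\exp_{\BS}$ is $\R\eins$-invariant by the observation following \eqref{eq:lem:prop_exp:exp_explicitFormula}. Applying \eqref{eq:expQInv_expressed_with_expPInv} with $q=W_i$, $a=L_k(W_k)$, $p=\BS$ and taking $\exp_{\BS}^{-1}$ of the previous display then gives
\begin{equation*}
  \exp_{W_i}^{-1}(L_k(W_k)) = \exp_{\BS}^{-1}(W_k) - \tfrac{1}{\rho}D_{\mc{F};k} - \exp_{\BS}^{-1}(W_i),
\end{equation*}
again modulo the $\R\eins$-ambiguity, which is harmless when later fed back into $\exp_{\BS}$.

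Finally, I would sum against the weights and use $\sum_{k\in\mc{N}_i}\omega_{ik}=1$ from \eqref{eq:weights-Omega-i} to pull the $\exp_{\BS}^{-1}(W_i)$ term out of the sum, and then a second application of \eqref{eq:expP_expressed_with_expQ} (this time with $p=W_i$, $q=\BS$) to rewrite $\exp_{W_i}(\,\cdot\,)$ as $\exp_{\BS}(\,\cdot\, + \exp_{\BS}^{-1}(W_i))$; the $-\exp_{\BS}^{-1}(W_i)$ term cancels and yields exactly the claimed identity. The only delicate point is the bookkeeping between elements of $T_0$ and $\R^c$: the vectors $-\frac{1}{\rho}D_{\mc{F};k}$ need not lie in $T_0$, but Lemma~\ref{lem:prop_exp}\eqref{enum:lem:prop_exp:expression_exp_dexp_and_dexpInv} (specifically the formula $\exp_p(v)=pe^v/\la p,e^v\ra$ and its $\R\eins$-invariance) guarantees that all manipulations remain well-defined. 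This is the step where care is needed, but no genuine obstacle arises.
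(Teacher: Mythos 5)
Your proposal is correct and follows essentially the same route as the paper's proof: both arguments use Lemma~\ref{lem:prop_exp}\eqref{enum:lem:prop_exp:ExpP_expressed_with_exp} together with $\exp_p=\Exp_p\circ\RO_p$ to collapse the $\Exp$-based average into a single $\exp_{W_i}$, then invoke the group-action identities \eqref{eq:expP_expressed_with_expQ} and \eqref{eq:expQInv_expressed_with_expPInv} and the normalization $\sum_{k\in\mc{N}_i}\omega_{ik}=1$ to recenter everything at $\BS$. The only difference is the order of recentering (the paper first moves $L_j$ to base point $W_i$ and introduces $V_k=\exp_{\BS}^{-1}(W_k)$, while you move $L_k$ to $\BS$ first), and your handling of the $\R\eins$-ambiguity via the invariance of $\exp_{\BS}$ is sound.
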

\begin{proof}
  Consider the expression $\Exp^{-1}_{W_i}\big(L_j(W_j)\big)$ in the sum of the definition \eqref{eq:def-Si} of $S_i(W)$. Using \eqref{eq:lem:prop_exp:exp_explicitFormula} and \eqref{eq:expP_expressed_with_expQ}, the likelihood \eqref{schnoerr-eq:def-Li} can be expressed as
  \begin{equation}
    L_j(W_j) = \exp_{W_j}\Big(-\frac{1}{\rho}D_{\mc{F};j}\Big) = \exp_{W_i}\Big( \exp_{W_i}^{-1}(W_j) - \frac{1}{\rho} D_{\mc{F};j}\Big).
  \end{equation}
  In the following, we set
  \begin{equation}\label{eq:proof-def-Vk}
  V_k = \exp_{\BS}^{-1}(W_k) \qquad
  \text{for all}\;k \in \mc{I}.
  \end{equation}
  With this and \eqref{eq:expQInv_expressed_with_expPInv}, we have
  \begin{equation}
    \exp_{W_i}^{-1}(W_j) = \exp_{\BS}^{-1}(W_j) - \exp_{\BS}^{-1}(W_i) = V_j - V_i.
  \end{equation}
  The two previous identities and Lemma~\ref{lem:prop_exp}\eqref{enum:lem:prop_exp:ExpP_expressed_with_exp} give
  \begin{subequations}
  \begin{align}
    \Exp^{-1}_{W_i}\big(L_j(W_j)\big) &= \RO_{W_i}\Big[ \exp_{W_i}^{-1}\big(L_j(W_j)\big)\Big] = \RO_{W_i}\Big[\exp_{W_i}^{-1}(W_j) - \frac{1}{\rho}D_{\mc{F};j}\Big]\\
    &= \RO_{W_i}\Big[V_j - V_i - \frac{1}{\rho}D_{\mc{F};j}\Big].
  \end{align}
  \end{subequations}
  The sum over the neighboring nodes $\mc{N}_i$ in the definition \eqref{eq:def-Si} of $S_i(W)$ can therefore be rewritten as
  \begin{subequations}
  \begin{align}
    \sum_{j \in \mc{N}_i} \omega_{ij}\Exp^{-1}_{W_i}\big(L_j(W_j)\big) &= \sum_{j \in \mc{N}_i} \omega_{ij}\RO_{W_i}\Big[V_j - V_i - \frac{1}{\rho}D_{\mc{F};j}\Big]\\
    &= \RO_{W_i}\Big[- V_i + \sum_{j \in \mc{N}_i} \omega_{ij}\Big(V_j - \frac{1}{\rho}D_{\mc{F};j}\Big)\Big],
  \end{align}
  \end{subequations}
  where we used $\sum_{j\in\mc{N}_i} \omega_{ij} = 1$ for the last equation. Setting $Y_i := \sum_{j \in \mc{N}_i} \omega_{ij}\Big(V_j - \frac{1}{\rho}D_{\mc{F};j}\Big)$, we then have
  \begin{equation}
    S_i(W) = \Exp_{W_i}\big( \RO_{W_i}\big[- V_i + Y_i \big]\big) = \exp_{W_i}\big( -V_i + Y_i \big) = \exp_{\BS}\big( Y_i \big),
  \end{equation}
  where the last equality again follows from \eqref{eq:expP_expressed_with_expQ} together with the definition \eqref{eq:proof-def-Vk} of $V_i$.
\end{proof}

\begin{lemma}\label{lem:dS_and_dST}
  The $i$-th component of the differential of the similarity map $S(W) \in \mc{W}$ is given by
  \begin{equation}
    dS_i(W)[X] = \sum_{j \in \mc{N}_i} \omega_{ij} \RO_{S_i(W)}\left[\frac{X_j}{W_j}\right] \quad \text{for all } X \in \mc{T}_0\text{ and }  i \in \mc{I}.
  \end{equation}
  Furthermore, the $i$-th component of the adjoint differential $dS(W)^\T\colon \mc{T}_0 \to \mc{T}_0$ with respect to the standard Euclidean inner product on $\mc{T}_0 \subset \R^{n\times c}$ is given by
  \begin{equation}
    dS_i(W)^\T[X] = \sum_{j\in\mc{N}_i} \omega_{ji} \PT\left[ \frac{\RO_{S_j(W)}X_j}{W_i} \right] \quad \text{ for every } X \in \mc{T}_0\text{ and } i \in \mc{I}.
  \end{equation}
\end{lemma}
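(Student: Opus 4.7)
The strategy is to use the simplified representation
$S_i(W) = \exp_{\BS}(Y_i)$ with $Y_i = \sum_{j\in\mc{N}_i}\omega_{ij}\bigl(V_j - \tfrac{1}{\rho}D_{\mc{F};j}\bigr)$
and $V_j = \exp_{\BS}^{-1}(W_j)$ from Lemma~\ref{lem:Si_expressed_by_expBS}. Since $\BS$ is a \emph{fixed} basepoint independent of $W$, the only $W$-dependence enters through the vectors $V_j$, which makes the chain rule essentially a one-line computation instead of having to differentiate the composite $\Exp_{W_i}\circ(\cdots)\circ\Exp_{W_i}^{-1}$.

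\textbf{Formula for $dS_i(W)$.} First I would differentiate in the direction $X \in \mc{T}_0$. By the chain rule,
\[
dS_i(W)[X] = d\exp_{\BS}(Y_i)\Bigl[\sum_{j\in\mc{N}_i}\omega_{ij}\,d\exp_{\BS}^{-1}(W_j)[X_j]\Bigr].
\]
Now I invoke Lemma~\ref{lem:prop_exp}\eqref{enum:lem:prop_exp:expression_exp_dexp_and_dexpInv}, which gives $d\exp_{\BS}^{-1}(W_j)[X_j] = \PT[X_j/W_j]$ and $d\exp_{\BS}(Y_i)[u] = R_{\exp_{\BS}(Y_i)}[u] = R_{S_i(W)}[u]$. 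Together with the identity $R_p \PT = R_p$ from \eqref{eq:Rp-Pi0}, this immediately collapses to
\[
dS_i(W)[X] = \sum_{j\in\mc{N}_i}\omega_{ij}\,R_{S_i(W)}\Bigl[\frac{X_j}{W_j}\Bigr].
\]

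\textbf{Formula for $dS(W)^\T$.} I determine the adjoint by evaluating $\la dS(W)[X], Y\ra$ for arbitrary $X, Y \in \mc{T}_0$ using the standard Frobenius inner product, and then reading off $dS(W)^\T[Y]$ row by row. Three ingredients will be used: (i) $R_{S_i(W)}$ is symmetric as a matrix, so it passes across the inner product; (ii) componentwise division satisfies the adjoint relation $\la a/b, c\ra = \la a, c/b\ra$; (iii) the neighborhood symmetry \eqref{eq:Ni-Nk}, $j\in\mc{N}_i \Leftrightarrow i\in\mc{N}_j$, allows me to swap the order of summation. Writing
\[
\la dS(W)[X], Y\ra = \sum_{i}\sum_{j\in\mc{N}_i}\omega_{ij}\,\Big\la X_j,\ \frac{R_{S_i(W)}Y_i}{W_j}\Big\ra
\]
and relabeling indices via \eqref{eq:Ni-Nk}, I rewrite this as $\sum_i \la X_i, \sum_{j\in\mc{N}_i}\omega_{ji}R_{S_j(W)}Y_j/W_i\ra$. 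Finally, since $X_i \in T_0$, the inner product is unchanged by replacing the right factor with its $\PT$-projection, which produces exactly the stated formula and ensures that $dS_i(W)^\T[Y] \in T_0$ as required.

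\textbf{Anticipated obstacle.} There is no real conceptual obstacle here; the only thing to be careful with is the placement of the projection $\PT$ in the adjoint formula. A priori the naive expression $\sum_{j\in\mc{N}_i}\omega_{ji}R_{S_j(W)}Y_j/W_i$ need not lie in $T_0$ (the division by $W_i$ breaks the $\eins$-orthogonality that $R_{S_j(W)}Y_j \in T_0$ enjoys), so inserting $\PT$ is essential in order that $dS(W)^\T$ genuinely maps $\mc{T}_0$ into $\mc{T}_0$. Justifying this insertion via the test-vector argument sketched above is the subtle point; the remaining manipulations are routine.
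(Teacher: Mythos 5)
Your proof is correct and follows essentially the same route as the paper: both differentiate the representation $S_i(W)=\exp_{\BS}(F_i(W))$ from Lemma~\ref{lem:Si_expressed_by_expBS} via the chain rule and the differentials in Lemma~\ref{lem:prop_exp}, absorbing $\PT$ into $R_{S_i(W)}$ by \eqref{eq:Rp-Pi0}, and both obtain the adjoint by testing $\la dS(W)[X],Y\ra$ against arbitrary $X,Y\in\mc{T}_0$, using the symmetry of $R_{S_i(W)}$, the neighborhood symmetry \eqref{eq:Ni-Nk} to swap the sums, and the fact that $X_i\in T_0$ to insert $\PT$. Your remark on why the projection $\PT$ is needed in the adjoint formula matches the paper's computation exactly.
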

\begin{proof}
  Define the map $F_i \colon \mc{W} \to \R^c$ by $F_i(W) := \sum_{j\in\mc{N}_i} \omega_{ij}\Big(\exp_{\BS}^{-1}(W_j) - \frac{1}{\rho}D_{\mc{F};j}\Big) \in \R^c$ for all  $W \in \mc{W}$. Let $\gamma \colon (-\veps, \veps) \to \mc{W}$ be a smooth curve, with $\veps>0$, $\gamma(0) = W$ and $\dot{\gamma}(0) = X$. By \eqref{eq:lem:prop_exp:dexp_and_dexpInv}, we then have
  \begin{equation}
    dF_i(W)[X] = \frac{d}{dt} F_i(\gamma(t))\big|_{t = 0} = \sum_{j\in\mc{N}_i} \omega_{ij} \frac{d}{dt} \exp_{\BS}^{-1}(\gamma_j(t))\big|_{t = 0}
    = \sum_{j\in\mc{N}_i} \omega_{ij} \PT\Big[\frac{X_j}{W_j}\Big].
  \end{equation}
  Due to Lemma~\ref{lem:Si_expressed_by_expBS}, we can express the $i$-th component of the similarity map as $S_i(W) = \exp_{\BS}\big(F_i(W)\big)$. Therefore, the differential of $S_i$ is given by
  \begin{subequations}
  \begin{align}
    d S_i(W)[X] &= d\exp_{\BS}(F_i(W))\big[dF_i(W)[X]\big] = \RO_{\exp_{\BS}(F_i(W))}\big[dF_i(W)[X]\big]\\
    &= \RO_{S_i(W)}\Big[\sum_{j\in\mc{N}_i} \omega_{ij} \PT\Big[\frac{X_j}{W_j}\Big]\Big] = \sum_{j\in\mc{N}_i} \omega_{ij} \RO_{S_i(W)}\Big[\frac{X_j}{W_j}\Big],
  \end{align}
  \end{subequations}
  where we used $\RO_{S_i(W)}\PT = \RO_{S_i(W)}$ from \eqref{eq:Rp-Pi0} to obtain the last equation.

  Now let $W \in \mc{W}$ and $X, Y\in\mc{T}_0$ be arbitrary. By assumption on the neighborhood structure \eqref{eq:def-Ni}, we have $j \in \mc{N}_i$ if and only if $i \in \mc{N}_j$, i.e.~$\psi_{\mc{N}_i}(j) = \psi_{\mc{N}_j}(i)$. Since $\RO_{S_i(W)}\in \R^{c\times c}$ is a symmetric matrix, we obtain
  \begin{subequations}\label{eq:lem:dS_and_dST_EQ1}
  \begin{align}
    &\Big\la dS(W)[X], Y\Big\ra = \sum_{i \in\mc{I}} \Big\la dS_i(W)[X], Y_i\Big\ra = \sum_{i\in\mc{I}} \sum_{j\in\mc{N}_i} \omega_{ij}\Big\la \RO_{S_i(W)} \Big[\frac{X_j}{W_j}\Big], Y_i\Big\ra\\
    &= \sum_{i\in\mc{I}} \sum_{j\in\mc{I}}\psi_{\mc{N}_i}(j) \omega_{ij}\Big\la \frac{X_j}{W_j}, \RO_{S_i(W)}[Y_i]\Big\ra
    = \sum_{i\in\mc{I}} \sum_{j\in\mc{I}}\psi_{\mc{N}_j}(i) \omega_{ij}\Big\la X_j, \frac{\RO_{S_i(W)}[Y_i]}{W_j}\Big\ra\\
    &= \sum_{j\in\mc{I}} \sum_{i\in\mc{N}_j} \omega_{ij}\Big\la X_j, \PT\Big[\frac{\RO_{S_i(W)}[Y_i]}{W_j}\Big]\Big\ra = \sum_{j\in\mc{I}} \Big\la X_j, \sum_{i\in\mc{N}_j} \omega_{ij}\PT\Big[\frac{\RO_{S_i(W)}[Y_i]}{W_j}\Big]\Big\ra.
  \end{align}
  \end{subequations}
  On the other hand, we have
  \begin{equation}\label{eq:lem:dS_and_dST_EQ2}
    \Big\la dS(W)[X], Y\Big\ra = \Big\la X, dS(W)^\T[Y]\Big\ra = \sum_{j\in\mc{I}}\Big\la X_j, dS_j(W)^\T[Y]\Big\ra.
  \end{equation}
  Because \eqref{eq:lem:dS_and_dST_EQ1} and \ref{eq:lem:dS_and_dST_EQ2} hold for all $X, Y \in \mc{T}_0$, the formula for $dS_i(W)^\T[X]$ is proven.
\end{proof}

\begin{theorem}\label{thm:nonexistance_of_Potential_for_assignment_flow}
  Suppose $c \geq 3$ and there exists a node $i_0 \in \mc{I}$ such that the distance vector $D_{\mc{F};i_0}$ is not constant: $D_{\mc{F};i_0} \notin \R \eins$. Then no potential $J \colon \mc{W} \to \R$ exists satisfying $\RO_W S(W) = -\Rgrad J(W)$, i.e.~the assignment flow \eqref{eq:assignment-flow} is not a Riemannian gradient descent flow.
\end{theorem}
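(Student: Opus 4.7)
I argue by contradiction. Suppose $J : \mc{W} \to \R$ exists with $R_W S(W) = -\Rgrad J(W)$. Combined with \eqref{eq:NovelRepAF:relationRgradEgrad} and the fact that $R_{W_i}|_{T_0}$ is an isomorphism (Lemma~\ref{lem:prop_exp}\eqref{enum:lem:prop_exp:RqIso}), this is equivalent to $\Pi_0 S(W) = -\Pi_0 \Egrad J(W)$ row-wise, i.e.\ to the $1$-form $\alpha$ on $\mc{W}$ defined by $\alpha_W(V) := -\la S(W), V\ra$ for $V \in \mc{T}_0$ being equal to $dJ$. Because $\mc{W}$ is a convex (hence simply connected) open subset of the affine subspace $\{W \in \R^{n \times c} : W\eins_c = \eins_n\}$, such a $J$ exists iff $\alpha$ is closed, equivalently
\begin{equation}\label{eq:plan-sym}
\la dS(W)[X], Y\ra = \la dS(W)[Y], X\ra \qquad \text{for all } X, Y \in \mc{T}_0,\ W \in \mc{W}.
\end{equation}
The strategy is to exhibit $W, X, Y$ violating \eqref{eq:plan-sym} under the hypotheses $c \geq 3$ and $D_{\mc{F};i_0} \notin \R\eins_c$.

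I would test \eqref{eq:plan-sym} with $X, Y$ supported only on the node $i_0$. By Lemma~\ref{lem:dS_and_dST}, only the $j = i_0$ summand contributes on each side, and since $R_{S_{i_0}(W)}[\cdot/W_{i_0}]$ already lies in $T_0$ the projection $\PT$ on the transpose side may be dropped; after dividing by $\omega_{i_0 i_0} > 0$, \eqref{eq:plan-sym} reduces to the \emph{single-node identity}
\[
\la R_{S_{i_0}(W)}[X_{i_0}/W_{i_0}], Y_{i_0}\ra = \la R_{S_{i_0}(W)}[Y_{i_0}/W_{i_0}], X_{i_0}\ra \qquad \forall\, X_{i_0}, Y_{i_0} \in T_0.
\]
A short antisymmetrization using $R_p = \Diag(p) - pp^{\T}$ shows this is equivalent to the componentwise rank condition
\begin{equation}\label{eq:plan-span}
S_{i_0}(W)/W_{i_0} \in \Span\bigl(S_{i_0}(W),\ \eins_c\bigr) \subset \R^c,
\end{equation}
which must hold at every $W \in \mc{W}$.

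The main obstacle is to realize \eqref{eq:plan-span} in a form directly involving $D_{\mc{F};i_0}$ and then violate it. To this end, I would choose $V_j := \exp_\BS^{-1}(W_j) \in T_0$ for $j \in \mc{N}_{i_0} \setminus \{i_0\}$ to satisfy $\sum_{j \neq i_0} \omega_{i_0 j} V_j \equiv \tfrac{1}{\rho} \sum_{j \neq i_0} \omega_{i_0 j} D_{\mc{F};j} \pmod{\R\eins_c}$, which is a single equation in $T_0$ admitting many solutions. Since $\exp_\BS$ is invariant under $\R\eins_c$, Lemma~\ref{lem:Si_expressed_by_expBS} then simplifies to
\[
S_{i_0}(W) = \exp_\BS\bigl(\omega V_{i_0} - (\omega/\rho)\, D_{\mc{F};i_0}\bigr), \qquad W_{i_0} = \exp_\BS(V_{i_0}), \qquad \omega := \omega_{i_0 i_0},
\]
with $V_{i_0} \in T_0$ still free. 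Using the explicit formula $\exp_\BS(v) = e^v/\la \eins_c, e^v\ra$ from \eqref{eq:lem:prop_exp:exp_explicitFormula}, condition \eqref{eq:plan-span} becomes the requirement that, for each $V_{i_0} \in T_0$, scalars $\alpha(V_{i_0}), \beta(V_{i_0})$ exist solving the corresponding componentwise equation. At $V_{i_0} = 0$, the hypothesis $D_{\mc{F};i_0} \notin \R\eins_c$ ensures $S_{i_0}(W)$ and $\eins_c$ are linearly independent and so uniquely determines $(\alpha(0), \beta(0))$; differentiating at $V_{i_0} = 0$ in a direction $v \in T_0$ then reduces the first-order constraint to
\[
v \in \Span\bigl(\eins_c,\ e^{(\omega/\rho)\, D_{\mc{F};i_0}}\bigr).
\]
This span is $2$-dimensional in $\R^c$ (once again by $D_{\mc{F};i_0} \notin \R\eins_c$), whereas $T_0$ has dimension $c - 1 \geq 2$ by $c \geq 3$; hence some $v \in T_0$ lies outside it. For such $v$ and arbitrarily small $t > 0$, the point $W \in \mc{W}$ with $V_{i_0} = tv$ violates \eqref{eq:plan-span} and thus \eqref{eq:plan-sym}, yielding the desired contradiction.
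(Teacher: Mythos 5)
Your proof is correct, and while it shares the paper's skeleton --- reduce the existence of $J$ to symmetry of $dS(W)$ on $\mc{T}_0$ via Lemma~\ref{lem:prop_exp}\eqref{enum:lem:prop_exp:RqIso} and the formulas of Lemma~\ref{lem:dS_and_dST}, then localize to test vectors supported at the single node $i_0$ --- the way you exhibit the asymmetry is genuinely different. The paper constructs one fully explicit point, $W^p_j=\exp_p(\tfrac{1}{\rho}D_{\mc{F};j})$ with $p\neq\BS$ and $p_k=p_l$ at the argmin/argmax coordinates $k,l$ of $D_{\mc{F};i_0}$, which forces $S_i(W^p)=p$ for every $i$, and then checks by direct computation that the direction $u=e_k-e_l$ leaves the nonzero residual $(e^{-D_{i_0k}/\rho}-e^{-D_{i_0l}/\rho})(\BS-p)$. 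You instead extract the exact algebraic content of single-node symmetry --- linear dependence of $S_{i_0}(W)/W_{i_0}$, $S_{i_0}(W)$ and $\eins_c$ --- and defeat it by a first-order perturbation along $V_{i_0}=tv$, using that $\Span(\eins_c,e^{\omega D_{\mc{F};i_0}/\rho})\cap T_0$ is one-dimensional while $\dim T_0=c-1\geq 2$. Your route is less explicit (the violating point is only shown to exist for some small $t$) but more informative: it characterizes exactly when single-node symmetry can hold and makes transparent where both hypotheses enter; the paper's is a self-contained closed-form verification. Two points you should make explicit to be airtight: (i) the condition $S_{i_0}(W)/W_{i_0}\in\Span(S_{i_0}(W),\eins_c)$ is equivalent to vanishing of the antisymmetric part only when $S_{i_0}(W)$ and $\eins_c$ are linearly independent, which does hold along your curve near $t=0$ precisely because $D_{\mc{F};i_0}\notin\R\eins$; and (ii) before differentiating you need $\alpha(t),\beta(t)$ to be uniquely determined and smooth near $t=0$, which follows by solving a $2\times 2$ system in two suitable coordinates whose determinant is nonzero at $t=0$.
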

\begin{proof}
  By \eqref{eq:Rp-Pi0}, we have $\RO_{W}S(W) = \RO_{W}\PT S(W)$ and $\RO_W \colon \mc{T}_0 \to \mc{T}_0$ is a linear isomorphism (Lemma~\ref{lem:prop_exp}\eqref{enum:lem:prop_exp:RqIso}). Therefore, the question of existence of a potential $J \colon \mc{W} \to \R$ for the assignment flow \eqref{eq:assignment-flow} can be transferred to the Euclidean setting by applying $(\RO_W|_{\mc{T}_0})^{-1}$ to both sides of the equation $\RO_W S(W) = \Rgrad J(W)$, i.e.
  \begin{equation}
    \RO_{W}S(W) = -\Rgrad J(W) = -\RO_{W} \Egrad J(W) \qquad \Leftrightarrow \qquad \PT S(W) = -\PT\Egrad J(W) \in \mc{T}_0.
  \end{equation}
  If such a potential $J$ exists, then the negative Hessian of $J$ is given by
  \begin{equation}
    -\PT\Hess J(W) = d\big(-\PT\Egrad J\big)(W) = d(\PT\circ S)(W) = \PT dS(W) = dS(W),
  \end{equation}
  where the last equation follows from $d S(W)\colon \mc{T}_0 \to \mc{T}_0$. Furthermore, $\Hess J(W)$ and   therefore also $dS(W)$ must be symmetric with respect to the Euclidean scalar product on $\mc{T}_0$. Hence, in order to prove that a potential cannot exist, we show that $dS(W)$ is not symmetric at every point $W \in \mc{W}$. To this end, we construct a $W \in\mc{W}$ and $X \in \mc{T}_0$ with $dS(W)[X] - dS(W)^\T[X] \neq 0$. It suffices to show
\begin{equation}\label{eq:proof-asymmetry-i}
  dS_i(W)[X] - dS_i(W)^\T[X] \neq 0 \quad\text{for some row index}\quad i \in \mc{I}.
\end{equation}
To simplify notation, we write $D_{i}$ instead of $D_{\mc{F};i}$ in the remainder of the proof. Due to the hypothesis, we have
\begin{equation}\label{eq:Di-not-eins}
D_{i} = D_{\mc{F};i} \neq \R\eins.
\end{equation}
Let $k, l \in[c]$ be indices such that
\begin{equation}
D_{ik} = \min_{r\in[c]} D_{ir} \quad\text{and}\quad
D_{il} =  \max_{r\in[c]} D_{ir}.
\end{equation}
Relation \eqref{eq:Di-not-eins} implies $D_{ik} < D_{il}$ and
  \begin{equation}\label{eq:thm:nonex_of_potential:EQ1}
    e^{-\frac{1}{\rho} D_{ik}} > e^{-\frac{1}{\rho} D_{il}}.
  \end{equation}
Define
\begin{equation}\label{eq:proof-def-u}
u = e_k - e_l \in T_{0},\;\qquad e_{k},e_{l} \in \mc{B}_{c}.
\end{equation}
Since $c \geq 3$, there is also a point
\begin{equation}\label{eq:proof-choose-p}
p \in\mc{S} \quad\text{with}\quad
p \neq \BS \quad\text{and}\quad
p_k = p_l,
\end{equation}
e.g.~by choosing $0<\alpha<\tfrac{1}{c}$ and setting $p_k = p_l = \alpha$ and $p_r = (c-2)^{-1} (1 - 2\alpha)$ for $r \neq k, l$.

  With these choices, we define the point
\begin{equation}
W^p \in \mc{W},\qquad
W^p_j = \exp_p\Big(\frac{1}{\rho}D_j\Big) \quad\text{for all}\quad j \in \mc{I}.
\end{equation}
Also, set $v := \exp_{\BS}^{-1}(p)$. Then $W^p_j = \exp_{\BS}\big(v +\frac{1}{\rho}D_j\big)$ by \eqref{eq:expP_expressed_with_expQ} and Lemma~\ref{lem:Si_expressed_by_expBS} implies
\begin{subequations}\label{eq:proof-Si=p}
\begin{align}
  S_i(W) &= \exp_{\BS}\Big( \sum_{j \in \mc{N}(i)} \omega_{ij}\big(\exp_{\BS}^{-1}(W^p_j) - \frac{1}{\rho}D_j\big) \Big) \\
  &= \exp_{\BS}\Big( \sum_{j \in \mc{N}(i)} \omega_{ij}  v \Big) = \exp_c(v) = p,
  \end{align}
\end{subequations}
for all $i \in \mc{I}$.
Now, define
\begin{equation}
X^u \in \mc{T}_0 \quad\text{with}\quad
X^u_k = \begin{cases}
u \in T_0,&\text{if}\,k=i \\
X^u_j = 0,&\text{if}\,k\neq i.
\end{cases}
\end{equation}
Using the expressions for $dS_i(W^P)$ and $dS_i(W^p)^\T$ from Lemma~\ref{lem:dS_and_dST}, we obtain
\begin{subequations}
\begin{align}
  dS_i(W^p)[X^u] &- dS_i(W^p)^\T[X^u]
  \overset{\phantom{\eqref{eq:proof-Si=p}}}{=} \omega_{ii}\RO_{S_i(W^p)}\Big[ \frac{X^u_i}{W^p_i} \Big] - \omega_{ii}\PT\Big[\frac{\RO_{S_i(W^p)}X^u_i}{W^p_i}\Big]\\
  &\overset{\eqref{eq:proof-Si=p}}{=}
  \omega_{ii}\RO_{p}\Big[ \frac{u}{\exp_p(\frac{1}{\rho}D_i)} \Big] - \omega_{ii}\PT\Big[\frac{\RO_{p}u}{\exp_p(\frac{1}{\rho}D_i)}\Big]\\
  &\overset{\eqref{eq:lem:prop_exp:exp_explicitFormula}}{=} \omega_{ii} \la p, e^{\frac{1}{\rho}D_i}\ra \Big(\RO_{p}\Big[ \frac{u}{p} e^{-\frac{1}{\rho}D_i} \Big] - \PT\Big[ \frac{e^{-\frac{1}{\rho}D_i}}{p} \RO_{p}u\Big]\Big).
\end{align}
\end{subequations}
Since $\omega_{ii} \la p, e^{\frac{1}{\rho}D_i}\ra > 0$, we only have to check the expression inside the brackets. As for the first term, using \eqref{eq:def-Rp}, we have
  \begin{subequations}
  \begin{align}
    \RO_{p}\Big[ \frac{u}{p} e^{-\frac{1}{\rho}D_i} \Big] 
    &= ue^{-\frac{1}{\rho}D_i} - \big\la u , e^{-\frac{1}{\rho}D_i} \big\ra p.
\intertext{
Setting $a := \big( \la e^{-\frac{1}{\rho}D_i}, \BS \ra \eins -  e^{-\frac{1}{\rho}D_i}\big)$, we obtain for the second term
}
    \PT\Big[ \frac{e^{-\frac{1}{\rho}D_i}}{p} \RO_{p}u\Big] &= \PT\Big[ e^{-\frac{1}{\rho}D_i} u - \la u, p\ra e^{-\frac{1}{\rho}D_i}\Big] = e^{-\frac{1}{\rho}D_i} u - \la u, e^{-\frac{1}{\rho}D_i}\ra \BS + \la u, p\ra a.
  \end{align}
  \end{subequations}
Thus, the term inside the brackets reads
  \begin{subequations}
  \begin{align}
    \RO_{p}\Big[ \frac{1}{p} ue^{-\frac{1}{\rho}D_i} \Big] - \PT\Big[ \frac{e^{-\frac{1}{\rho}D_i}}{p} \RO_{p}u\Big] &= - \big\la u , e^{-\frac{1}{\rho}D_i} \big\ra p + \la u, e^{-\frac{1}{\rho}D_i}\ra\BS - \la u, p\ra a\\
    &=\la u , e^{-\frac{1}{\rho}D_i} \ra \big(\BS -  p\big)  - \la u, p\ra a.
  \end{align}
  \end{subequations}
\eqref{eq:proof-def-u} and \eqref{eq:proof-choose-p} imply
  \begin{equation}
    \la u, e^{-\frac{1}{\rho}D_i}\ra = e^{-\frac{1}{\rho}D_{ik}} - e^{-\frac{1}{\rho}D_{il}} > 0\quad \text{and}\quad \la u, p\ra = p_k - p_l = 0
  \end{equation}
such that we can conclude
  \begin{equation}
  \la u , e^{-\frac{1}{\rho}D_i} \ra \big(\BS - p\big)  - \la u, p\ra a = (e^{-\frac{1}{\rho}D_{ik}} - e^{-\frac{1}{\rho}D_{il}}) \big(\BS - p\big) \neq 0.
  \end{equation}
This proves \eqref{eq:proof-asymmetry-i} and consequently the theorem.
\end{proof}
\subsection{S-parametrization}\label{sec:NovelRepAF:S-parametrization}

Even though Theorem \ref{thm:nonexistance_of_Potential_for_assignment_flow} says that no potential exists for the assignment flow in general,
we reveal in this section a `hidden' potential flow under an additional assumption. To this end, we decouple the assignment flow into two components and show that one component depends on the second one. The dominating second one, therefore, provides a new parametrization of the assignment flow. Assuming symmetry of the averaging matrix defined below by \eqref{eq:def-Omega-w}, the dominating flow becomes a Riemannian gradient descent flow. The corresponding potential defined on a continuous domain will be studied in subsequent sections.

For notational efficiency, we collect all weights \eqref{eq:weights-Omega-i} into the
\textit{averaging matrix}
\begin{equation}\label{eq:def-Omega-w}
  \Omega^{\w} \in \R^{n\times n}\; \text{with}\; \Omega^{\w}_{ij} := \psi_{\mc{N}_{i}}(j) \omega_{ij}
  = \begin{cases}                                             \omega_{ij} &\text{if } j \in \mc{N}_i,\\                                           0 &\text{else}
\end{cases}, \quad \text{for } i, j \in \mc{I}.
\end{equation}
$\Omega^{\w}$ encodes the spatial structure of the graph and the weights. For an arbitrary matrix $M \in \R^{n\times c}$, the average of its row vectors using the weights indexed by the neighborhood $\mc{N}_{i}$ is given by
\begin{equation}\label{eq:wik-by-Omega}
\sum_{k \in \mc{N}_i} \omega_{ik} M_k
= \sum_{k \in \mc{I}} \Omega^{\w}_{ik}M_k
= M^{\T} \Omega^{\w}_{i}.
\end{equation}
Thus, all row vector averages are given as row vectors of the matrix $\Omega^{\w} M$.

We now introduce a new representation of the assignment flow.
\begin{proposition}\label{prop:decoupled-flow}
  The assignment flow \eqref{eq:assignment-flow} is equivalent to the system
  \begin{subequations}\label{eq:decoupled-flow}
  \begin{align}
    \dot{W} &= \RO_{W} \ol{S} &&\text{with}\quad W(0) = \BW\label{eq:def-W-flow}\\
    \dot{\ol{S}} &= \RO_{\ol{S}} [\Omega\ol{S}] &&\text{with}\quad\ol{S}(0) = S(\BW).\label{eq:def-S-flow}
  \end{align}
  \end{subequations}
\end{proposition}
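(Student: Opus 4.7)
The plan is to prove the equivalence by showing directly that $\bar S(t) := S(W(t))$ satisfies \eqref{eq:def-S-flow} whenever $W$ solves the assignment flow \eqref{eq:assignment-flow}, and then closing the loop via an ODE uniqueness argument. The key tool is the $\exp_{\BS}$-representation of Lemma~\ref{lem:Si_expressed_by_expBS}, namely $S_i(W) = \exp_{\BS}(Y_i(W))$ with $Y_i(W) = \sum_{j \in \mc{N}_i} \omega_{ij}\bigl(\exp_{\BS}^{-1}(W_j) - \frac{1}{\rho} D_{\mc{F};j}\bigr)$: this expresses $S_i$ in a chart whose base point $\BS$ does not depend on $W$, so that time-differentiation reduces to differentiating the $\exp_{\BS}^{-1}(W_j)$ terms while the distance terms drop out as time-independent constants.

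Differentiating $\bar S$ along a solution of the assignment flow and using \eqref{eq:lem:prop_exp:dexp_and_dexpInv} together with $\dot W_j = R_{W_j} S_j(W)$, one obtains
\begin{equation*}
  \dot Y_i = \sum_{j \in \mc{N}_i} \omega_{ij}\, \PT\Big[\frac{R_{W_j} S_j(W)}{W_j}\Big] = \sum_{j \in \mc{N}_i} \omega_{ij}\, \PT S_j(W),
\end{equation*}
where the second equality uses the explicit form of $R_{W_j}$ in \eqref{eq:def-Rp} together with $\PT \eins = 0$. Applying the chain rule with $d\exp_{\BS}(Y_i) = R_{\exp_{\BS}(Y_i)} = R_{\bar S_i}$ from \eqref{eq:lem:prop_exp:dexp_and_dexpInv}, and using $R_p \PT = R_p$ from \eqref{eq:Rp-Pi0} row-wise together with the matrix-averaging identity \eqref{eq:wik-by-Omega}, this yields $\dot{\bar S}_i = R_{\bar S_i}\bigl[(\Omega^{\w} \bar S)_i\bigr]$, which together with $\bar S(0) = S(\BW)$ is precisely \eqref{eq:def-S-flow}; \eqref{eq:def-W-flow} is then a trivial rewriting of the original flow through $\bar S = S(W)$.

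For the converse, assuming $(W,\bar S)$ solves \eqref{eq:decoupled-flow}, I would set $\tilde S(t) := S(W(t))$ and repeat the same chain-rule computation, this time using $\dot W_j = R_{W_j} \bar S_j$ from \eqref{eq:def-W-flow}; exactly the same manipulations give $\dot{\tilde S}_i = R_{\tilde S_i}\bigl[(\Omega^{\w} \bar S)_i\bigr]$. Thus $\tilde S$ and $\bar S$ both satisfy the non-autonomous ODE $\dot g_i = R_{g_i}\bigl[(\Omega^{\w} \bar S(t))_i\bigr]$ with identical initial data $S(\BW)$; since the right-hand side is smooth in $g$, ODE uniqueness forces $\tilde S \equiv \bar S$, whereupon \eqref{eq:def-W-flow} becomes the original assignment flow. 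The only delicate point in the whole argument is keeping track of where $\PT$ enters and recognizing that it is absorbed by $R_{\bar S_i}$; beyond that, everything is bookkeeping with the identities already assembled in Lemmas~\ref{lem:prop_exp} and~\ref{lem:Si_expressed_by_expBS}.
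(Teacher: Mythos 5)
Your argument is correct and follows essentially the same route as the paper: the paper differentiates $\ol{S}(t)=S(W(t))$ using the precomputed differential $dS_i(W)$ from Lemma~\ref{lem:dS_and_dST} (itself built on Lemma~\ref{lem:Si_expressed_by_expBS}), arriving at the same cancellation $R_{W_j}S_j(W)/W_j = S_j(W)-\la W_j,S_j(W)\ra\eins_c$ and absorption of $\PT$ via $R_p\PT=R_p$ that you carry out inline. Your explicit converse via ODE uniqueness is a small but worthwhile addition the paper leaves implicit when it asserts equivalence.
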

\begin{remark}\label{rem:decoupled-flow}
We observe that the flow $W(t)$ is completely determined by $S(t)$. In the following, we refer to the dominating part \eqref{eq:def-S-flow} as the \textit{$S$-flow}.
\end{remark}
\begin{proof}
  Let $W(t)$ be a solution of the assignment flow, i.e. $\dot{W}_i = \RO_{W_i} S_i(W)$ for all $i \in \mc{I}$. Set $\ol{S}(t) := S(W(t))$. Then \eqref{eq:def-W-flow} is immediate from the assumption on $W$. Using the expression for $dS_i(W)$ from Lemma~\ref{lem:dS_and_dST} gives
  \begin{equation}\label{eq:prop:decoupled-flow_EQ1}
    \dot{\ol{S}}_i = \frac{d}{dt}S(W)_i = dS_i(W)[\dot{W}] = \sum_{j\in\mc{N}_i} \omega_{ij}\RO_{S_i(W)}\Big[\frac{\dot{W}_j}{W_j}\Big].
  \end{equation}
  Since $W$ solves the assignment flow and $\RO_{S_i(W)} = \RO_{S_i(W)}\PT$ by \eqref{eq:Rp-Pi0} with $\ker(\PT) = \R\eins_c$, it follows using the explicit expresssion \eqref{eq:def-Rp} of $\RO_{S_i(W)}$ that
  \begin{subequations}
  \begin{align}
    \RO_{S_i(W)}\Big[\frac{\dot{W}_j}{W_j}\Big] &= \RO_{S_i(W)}\Big[\frac{\RO_{W_j}S_j(W)}{W_j}\Big] = \RO_{S_i(W)}\Big[S_j(W) - \la W_j, S_j(W)\ra \eins_c\Big]\\
    &= \RO_{S_i(W)}\big[S_j(W)\big].
  \end{align}
  \end{subequations}
  Back-substitution of this identity into \eqref{eq:prop:decoupled-flow_EQ1}, pulling the linear map $\RO_{S_i(W)}$ out of the sum and keeping $S_i(W) = \ol{S}_i$ in mind, results in
  \begin{equation}
    \dot{\ol{S}}_i = \RO_{S_i(W)}\big[S_j(W)\big]
    = \RO_{\ol{S}_i}\Big[ \sum_{j\in\mc{N}_i} \omega_{ij} \ol{S}_j\Big] = \RO_{\ol{S}_i} [\ol{S}^{\T}\Omega^{\w}_{i}] \quad \text{for all } i \in \mc{I}.
  \end{equation}
Collecting these vectors as row vectors of the matrix $\dot{\ol{S}}$ gives \eqref{eq:def-S-flow}.
\end{proof}
\begin{remark}\label{rem:S-flow}
Henceforth, we write $S$ for the $S$-flow $\ol{S}$ to stress the underlying connection to the assignment flow and to simplify the notation.
\end{remark}
We next show that the $S$-flow which essentially determines the assignment flow (Remark \ref{rem:S-flow}) becomes a Riemannian descent flow under the additional assumption that the averaging matrix \eqref{eq:def-Omega-w} is symmetric.

\begin{proposition}\label{prop:S-flow-gradient}
  Suppose the weights defining the similarity map in \eqref{eq:def-Si} are symmetric, i.e. $(\Omega^{\w})^\T = \Omega^{\w}$. Then the $S$-flow \eqref{eq:def-S-flow} is a Riemannian gradient decent flow $\dot{S} = -\Rgrad J(S)$, induced by the potential
  \begin{equation}\label{eq:def_J_Sflow}
    J(S) := -\frac{1}{2}\la S, \Omega^{\w} S\ra,\quad S \in \mc{W}.
  \end{equation}
\end{proposition}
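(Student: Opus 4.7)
The plan is to reduce the claim, via the general relation $\Rgrad J(S) = R_S \Egrad J(S)$ recorded in \eqref{eq:NovelRepAF:relationRgradEgrad}, to a straightforward computation of the Euclidean gradient of the quadratic form $J$ on the ambient space $\R^{n\times c}$, and then to recognize that the product map $R_S$ applied to this gradient reproduces the right-hand side of the $S$-flow \eqref{eq:def-S-flow} row by row.

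First I would expand, using the Frobenius inner product on $\R^{n\times c}$,
\begin{equation}
\langle S, \Omega^{\w} S\rangle = \sum_{i,j\in\mc{I}} \sum_{k\in\mc{J}} \Omega^{\w}_{ij} S_{ik} S_{jk},
\end{equation}
and differentiate componentwise. A direct calculation yields the Euclidean gradient on the ambient space,
\begin{equation}
\Egrad J(S) = -\tfrac{1}{2}\bigl(\Omega^{\w} + (\Omega^{\w})^\T\bigr) S,
\end{equation}
which, under the symmetry hypothesis $(\Omega^{\w})^\T = \Omega^{\w}$, collapses to $\Egrad J(S) = -\Omega^{\w} S$. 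Since $J$ is a smooth polynomial it extends freely from $\mc{W}$ to its ambient space, so \eqref{eq:NovelRepAF:relationRgradEgrad} applies without qualification.

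Next I would combine the two identities:
\begin{equation}
-\Rgrad J(S) = -R_S \Egrad J(S) = R_S[\Omega^{\w} S].
\end{equation}
Because $R_S$ acts row-wise, the $i$-th row of $R_S[\Omega^{\w} S]$ is $R_{S_i}\bigl[(\Omega^{\w} S)_i\bigr] = R_{S_i}\bigl[\sum_{j\in\mc{N}_i}\omega_{ij}S_j\bigr]$, which matches the right-hand side of \eqref{eq:def-S-flow} exactly. Hence $\dot S = R_S[\Omega^{\w} S] = -\Rgrad J(S)$, which is the assertion.

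The argument is essentially a one-line computation and I expect no genuine obstacle. The only points requiring a moment of care are the following: one must view $\Egrad J$ on the ambient Euclidean space so that \eqref{eq:NovelRepAF:relationRgradEgrad} is directly applicable, and the symmetry of $\Omega^{\w}$ is essential — without it the Euclidean gradient would be governed by the symmetrization $\tfrac{1}{2}(\Omega^{\w} + (\Omega^{\w})^\T)$ and would not reproduce $\Omega^{\w} S$, so the $S$-flow would fail to be a gradient flow of this particular potential.
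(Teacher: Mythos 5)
Your proposal is correct and follows essentially the same route as the paper: compute the Euclidean gradient of the quadratic form (the paper does this via a directional derivative along a curve, you do it componentwise, but both hinge on the symmetry of $\Omega^{\w}$ to obtain $\Egrad J(S) = -\Omega^{\w}S$), then push it through the relation $\Rgrad J(S) = R_S\,\Egrad J(S)$ and identify the result row-wise with the right-hand side of the $S$-flow. No gaps.
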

\begin{proof}
  Let $\gamma \colon (-\veps, \veps) \to \mc{W},\,\veps > 0$, be any smooth curve with $\dot{\gamma}(0) = V \in \R^{n\times c}$ and $\gamma(0) = S$. By the symmetry of $\Omega^{\w}$, we have $\la \Egrad J(S), V\ra = dJ(S)[V] = \frac{d}{dt}J(\gamma(t))\big|_{t = 0} = -\la \Omega^{\w} S, V\ra$ for all $V \in \R^{n\times c}$. Therefore, $\Egrad J(S) = - \Omega^{\w} S$. Thus, the Riemannian gradient is given by $\Rgrad J(S) = \RO_S[\Egrad J(S)] = - \RO_S[\Omega^{\w} S]$.
\end{proof}

Consider
\begin{equation}\label{eq:def-LmcG}
L_{\mc{G}} = I_{n} - \Omega^{\w},
\end{equation}
where $I_{n} \in \R^{n\times n}$ is the identity matrix. Since $I_{n} = \Diag(\Omega^{\w} \eins_{n})$ by \eqref{eq:weights-Omega-i} is the degree matrix of the symmetric averaging matrix $\Omega^{\w}$, $L_{\mc{G}}$ can be regarded as Laplacian (matrix) of the underlying  undirected weighted graph $\mc{G} = (\mc{V}, \mc{E})$\footnote{For undirected graphs, the graph Laplacian is commonly defined by the weighted \textit{adjacency} matrices with diagonal entries $0$, whereas $\Omega^{\w}_{ii}=\w_{ii}>0$. The diagonal entries do not affect the quadratic form \eqref{eq:J_expressed_as_dirichlet_energy_Sflow}, however.}. For the analysis of the $S$-flow it will be convenient to rewrite the potential \eqref{eq:def_J_Sflow} accordingly.
\begin{proposition}\label{prop:potential-S-flow}
  Under the assumption of Proposition \ref{prop:S-flow-gradient}, the potential \eqref{eq:def_J_Sflow} can be written in the form
  \begin{equation}\label{eq:J_expressed_as_dirichlet_energy_Sflow}
    J(S) = \frac{1}{2}\la S, L_{\mc{G}} S\ra - \frac{1}{2}\| S\|^2 = \frac{1}{4} \sum_{i\in\mc{I}} \sum_{j\in\mc{N}_i} \omega_{ij}\|S_i-S_j\|^2 - \frac{1}{2}\|S\|^2.
  \end{equation}
The matrix $L_{\mc{G}}$ is symmetric, positive semidefinite and $L_{\mc{G}}\eins_{n}=0$.
\end{proposition}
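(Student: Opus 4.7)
The plan is to verify the two equivalent forms of $J(S)$ by direct algebraic manipulation, and then read off the three structural properties of $L_{\mc{G}}$ from its definition.

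First, I would check the middle equality. Substituting $L_{\mc{G}} = I_n - \Omega^{\w}$ into $\frac{1}{2}\la S, L_{\mc{G}} S\ra - \frac{1}{2}\|S\|^2$ and using $\la S, I_n S\ra = \|S\|^2$ immediately gives $-\frac{1}{2}\la S, \Omega^{\w} S\ra$, which is the definition of $J$ in \eqref{eq:def_J_Sflow}. So that equality is a one-line computation.

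Next, I would establish the rightmost equality, which is the standard rewriting of a graph Laplacian quadratic form as a Dirichlet sum. Expanding
\begin{equation}
\sum_{i\in\mc{I}}\sum_{j\in\mc{N}_i}\omega_{ij}\|S_i-S_j\|^2 = \sum_{i,j\in\mc{I}}\Omega^{\w}_{ij}\bigl(\|S_i\|^2 - 2\la S_i,S_j\ra + \|S_j\|^2\bigr),
\end{equation}
I would use the symmetry hypothesis $(\Omega^{\w})^\T = \Omega^{\w}$ to identify the first and third summands, and the row-sum normalization $\sum_{j}\Omega^{\w}_{ij} = \sum_{k\in\mc{N}_i}\omega_{ik} = 1$ from \eqref{eq:weights-Omega-i} to collapse them to $\|S\|^2$ each. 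The cross term assembles into $-2\la S, \Omega^{\w} S\ra$. Dividing by $4$ then yields $\frac{1}{2}\la S, (I_n - \Omega^{\w})S\ra = \frac{1}{2}\la S, L_{\mc{G}}S\ra$, completing the chain.

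For the final assertions: symmetry of $L_{\mc{G}}$ is inherited from the symmetry of $I_n$ and of $\Omega^{\w}$; positive semidefiniteness is immediate from the Dirichlet-sum representation just derived, since each summand $\omega_{ij}\|S_i-S_j\|^2$ is nonnegative; and $L_{\mc{G}}\eins_n = \eins_n - \Omega^{\w}\eins_n = 0$ because row-stochasticity of $\Omega^{\w}$ (again by \eqref{eq:weights-Omega-i}) gives $\Omega^{\w}\eins_n = \eins_n$.

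There is no real obstacle here — the only subtlety worth flagging is that the argument genuinely needs symmetry of $\Omega^{\w}$ (to merge the $\|S_i\|^2$ and $\|S_j\|^2$ contributions) and row-stochasticity (to evaluate the resulting row sums), so I would be careful to invoke both hypotheses explicitly at the appropriate step.
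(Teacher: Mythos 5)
Your proof is correct and follows essentially the same route as the paper's: verify the first equality by substituting $L_{\mc{G}} = I_n - \Omega^{\w}$, establish the Dirichlet-sum identity $\sum_{i}\sum_{j\in\mc{N}_i}\omega_{ij}\|S_i-S_j\|^2 = 2\la S, L_{\mc{G}}S\ra$ using symmetry of $\Omega^{\w}$ together with the row normalization \eqref{eq:weights-Omega-i}, and read off the properties of $L_{\mc{G}}$ from that representation. The only (immaterial) difference is bookkeeping: you expand $\|S_i-S_j\|^2$ into three terms and evaluate each sum, whereas the paper splits it as $\la S_j, S_j-S_i\ra + \la S_i, S_i-S_j\ra$ and uses symmetry to equate the two halves.
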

\begin{proof}
We have $J(S)=-\frac{1}{2}\la S,(\Omega^{\w}-I_{n}) S\ra + \la S,S\ra = \frac{1}{2}(\la S,L_{\mc{G}} S\ra-\|S\|^{2})$. Thus, we focus on the sum of \eqref{eq:J_expressed_as_dirichlet_energy_Sflow}.

  First, note that $\| S_j - S_i\|_2^2 = \la S_j, S_j - S_i\ra + \la S_i, S_i - S_j\ra$. Since $\psi_{\mc{N}_i}(j) = \psi_{\mc{N}_j}(i)$ and $\omega_{ij} = \omega_{ji}$ by assumption, we have
  \begin{subequations}
  \begin{align}
    \sum_{i \in \mc{I}} \sum_{j\in\mc{N}_i} \omega_{ij} \la S_j,  S_j - S_i\ra &=
      \sum_{i, j\in\mc{I}} \psi_{\mc{N}_{i}}(j) \omega_{ij}\la S_j,  S_j - S_i\ra
      = \sum_{i, j\in\mc{I}} \psi_{\mc{N}_{j}}(i) \omega_{ji}\la S_j,  S_j - S_i\ra\\
    &= \sum_{j\in\mc{I}} \sum_{i\in\mc{N}_j} \omega_{ji}\la S_j, S_j - S_i\ra
      = \sum_{i\in\mc{I}} \sum_{j\in\mc{N}_i} \omega_{ij}\la S_i, S_i - S_j\ra,
  \end{align}
  \end{subequations}
  where the last equality follows by renaming the indices $i$ and $j$. Thus, using \eqref{eq:weights-Omega-i},
  \begin{subequations}\label{eq:SLS-explicit}
  \begin{align}
    \sum_{i \in \mc{I}} \sum_{j\in\mc{N}_i} \omega_{ij} \| S_i - S_j\|_2^2 &= \sum_{i \in \mc{I}} \sum_{j\in\mc{N}_i} \omega_{ij} \la S_j,  S_j - S_i\ra + \sum_{i\in\mc{I}} \sum_{j\in\mc{N}_i} \omega_{ij}\la S_i, S_i - S_j\ra\\
    &= 2 \sum_{i\in\mc{I}} \sum_{j\in\mc{N}_i} \omega_{ij}\la S_i, S_i - S_j\ra
      = 2 \sum_{i\in\mc{I}} \Big\la S_i, S_i - \sum_{j\in\mc{N}_i} \omega_{ij} S_j\Big\ra\\
    &= 2 \sum_{i\in\mc{I}} \la S_i, (L S)_{i}\ra = 2 \la S, L S\ra.
  \end{align}
  \end{subequations}
The properties of $L_{\mc{G}}$ follow from the symmetry of $\Omega^{\w}$, nonnegativity of the quadratic form \eqref{eq:SLS-explicit} and definition \eqref{eq:def-LmcG}.
\end{proof}


\section{Continuous-Domain Variational Approach}\label{sec:zero-scale-functional}

In this section, we study a continuous-domain variational formulation of the potential of Proposition \ref{prop:potential-S-flow}. We confine ourselves to uniform weights \eqref{eq:weights-Omega-i} and neighborhoods \eqref{eq:def-Ni} that only contain the nearest neighbors of each vertex $i$, such that $L_{\mc{G}}$ becomes the discretized ordinary Laplacian. As a result, we consider the problem to minimize the functional
\begin{subequations}\label{eq:def-E-alpha}
\begin{align}
&E_{\alpha} \colon H^{1}(\mc{M};\R^{c}) \to \R,
\\
&E_{\alpha}(S) := \int_{\mc{M}}\|DS(x)\|^{2}-\alpha \|S(x)\|^{2} \dd{x},\qquad \alpha > 0.
\end{align}
\end{subequations}
Throughout this section, $\mc{M} \subset \R^{2}$ is a simply-connected bounded open subset in the Euclidean plane. Parameter $\alpha$ controls the interaction between regularization and enforcing integrality when $S(x),\, x \in \mc{M}$ is restricted to values in the probability simplex.

We prove well-posedness for vanishing (Section \ref{sec:Well-Posedness}) and Dirichlet boundary conditions (Section \ref{sec:Fixed Boundary Conditions}), respectively, and specify explicitly the set of minimizers in the former case. The gradient descent flow corresponding to the latter case, initialized by means of given data and with parameter value $\alpha=1$, may be seen as continuous-domain extension of the assignment flow, that is parametrized according to \eqref{prop:decoupled-flow} and operates at the smallest spatial scale in terms of the size $|\mc{N}_{i}|$ of uniform neighborhoods \eqref{eq:def-Ni} (in the discrete formulation \eqref{eq:assignment-flow}: nearest neighbor averaging). We illustrate this by a numerical example (Section \ref{sec:Numerical-Example}), based on discretizing \eqref{eq:def-E-alpha} and applying an algorithm that mimics the $S$-flow and converges to a local minimum of the non-convex functional \eqref{eq:def-E-alpha}, by solving a sequence of convex programs.

We point out that $\mc{M}$ could be turned into a Riemannian manifold using a metric that reflects images features (edges etc.), as was proposed with the Laplace-Beltrami framework for image denoising \cite{Kimmel:2000aa}. In this work we focus on the essential point, however, that distinguishes image \textit{denoising} from image \textit{labeling}, i.e.~the interaction of the two terms \eqref{eq:def-E-alpha} that essentially is a consequence of the information geometry of the assignment manifold $\mc{W}$ \eqref{schnoerr-eq:def-mcW}.

\subsection{Well-Posedness}\label{sec:Well-Posedness}

Based on \eqref{eq:def-Delta-n} we define the closed convex set
\begin{equation}
  \ms{D}^{1}(\mc{M}) = \{ S\in H^{1}(\mc{M}; \R^c)\colon\ S(x) \in \Delta_{c}\;\text{a.e.~in}\; \mc{M} \}.
\end{equation}

and focus on the variational problem
\begin{equation}\label{eq:def-Ealpha-D1M}
\inf_{S \in \ms{D}^{1}(\mc{M})} E_{\alpha}(S),
\end{equation}
with $E_{\alpha}$ given by \eqref{eq:def-E-alpha}. $E_{\alpha}$ is smooth but nonconvex. We specify the set of minimizers (Prop.~\ref{prop:E-alpha-inf}). Recall notation \eqref{eq:def-e1-en}.
\begin{lemma}\label{lem:p-mcS-norm}
Let $p \in \Delta^{c}$. Then $\|p\|=1$ if and only if $p \in \mc{B}_{c}$.
\end{lemma}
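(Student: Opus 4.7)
The plan is to prove this by the elementary componentwise inequality $t^2 \le t$ valid for $t \in [0,1]$, with equality iff $t \in \{0,1\}$.

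The forward implication is immediate: if $p = e_j$ for some $j \in [c]$, then $\|p\|^2 = \sum_{i \in [c]} (e_j)_i^2 = 1$. For the converse, suppose $p \in \Delta_c$ with $\|p\| = 1$. By the simplex definition \eqref{eq:def-Delta-n}, we have $p_i \ge 0$ for each $i \in [c]$ and $\sum_{i \in [c]} p_i = 1$, which in particular forces $p_i \in [0,1]$ for every $i$. Hence $p_i^2 \le p_i$ with equality if and only if $p_i \in \{0,1\}$. Summing these inequalities and using $\|p\|^2 = 1 = \sum_i p_i$ gives
\begin{equation}
1 = \|p\|^2 = \sum_{i \in [c]} p_i^2 \;\le\; \sum_{i \in [c]} p_i = 1,
\end{equation}
so every term must satisfy equality, i.e.\ $p_i \in \{0,1\}$ for all $i \in [c]$. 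Combined with $\sum_i p_i = 1$, exactly one component equals $1$ and the rest vanish, so $p = e_j$ for some $j \in [c]$, i.e.\ $p \in \mc{B}_c$.

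There is no real obstacle here; the argument is a two-line consequence of $t^2 \le t$ on $[0,1]$ together with the simplex normalization. The only thing to be careful about is to explicitly note that the simplex constraints force $p_i \le 1$ (which follows from nonnegativity and the sum constraint) so that the quadratic bound applies.
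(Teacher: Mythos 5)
Your proof is correct and follows essentially the same approach as the paper, namely exploiting $p_i^2 \le p_i$ for components in $[0,1]$ together with the normalization $\sum_i p_i = 1$; the paper phrases the converse as a contrapositive while you argue directly via the equality case. Your version is in fact slightly more careful, since the paper's inline claim $p_i^2 < p_i$ is only strict for $0 < p_i < 1$, whereas your equality analysis handles the zero components explicitly.
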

\begin{proof}
The `if' statement is obvious. As for the `only if', suppose $p \not\in \mc{B}_{c}$, i.e.~$p_{i} < 1$ for all $i \in [c]$. Then $p_{i}^{2} < p_{i}$ and $\|p\|^{2} < \|p\|_{1}=1$.
\end{proof}
\begin{proposition}\label{prop:E-alpha-inf}
The functional $E_{\alpha} \colon \ms{D}^{1}(\mc{M}) \to \R$ given by \eqref{eq:def-E-alpha} is lower bounded,
\begin{equation}\label{eq:E-alpha-inf}
E_{\alpha}(S) \geq -\alpha \Vol(\mc{M}) > -\infty,\qquad \forall S \in \ms{D}^{1}(\mc{M}).
\end{equation}
This lower bound is attained at some point in
\begin{equation}\label{eq:E-alpha-minimizers}
\operatornamewithlimits{\arg \min}_{S \in \ms{D}^{1}(\mc{M})} E_{\alpha}(S) = \begin{cases}
\{S_{e_{1}},\dotsc,S_{e_{c}}\}, &\text{if}\;\alpha > 0,
\\
\{S_{p} \colon \mc{M} \to \Delta \colon p \in \Delta\},
&\text{if}\;\alpha=0,
\end{cases}
\end{equation}
where, for any $p \in \Delta$, $S_{p}$ denotes the constant vector field $x \mapsto S_{p}(x)=p$.
\end{proposition}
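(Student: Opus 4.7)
The plan is to establish the lower bound pointwise and then characterize equality, leveraging Lemma~\ref{lem:p-mcS-norm} and Lemma~\ref{lem:Du-0-constant}.

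First I would prove the bound \eqref{eq:E-alpha-inf}. For any $S \in \ms{D}^{1}(\mc{M})$, the gradient term is nonnegative, $\|DS(x)\|^{2} \geq 0$ a.e. For the second term, Lemma~\ref{lem:p-mcS-norm} (combined with $\|p\|^{2} \leq \|p\|_{1} = 1$ for $p \in \Delta_{c}$) gives $\|S(x)\|^{2} \leq 1$ a.e., so $-\alpha\|S(x)\|^{2} \geq -\alpha$ a.e. Integrating over $\mc{M}$ yields $E_{\alpha}(S) \geq -\alpha \Vol(\mc{M})$, which is finite since $\mc{M}$ is bounded.

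Next I would verify that each candidate in \eqref{eq:E-alpha-minimizers} attains the bound. A constant vector field $S_{p}$ satisfies $DS_{p} = 0$, so $E_{\alpha}(S_{p}) = -\alpha \int_{\mc{M}} \|p\|^{2}\,dx$. For $\alpha > 0$ and $p = e_{i} \in \mc{B}_{c}$ this equals $-\alpha\Vol(\mc{M})$, matching the bound. For $\alpha = 0$ every constant $p \in \Delta$ yields $E_{0}(S_{p}) = 0$, also matching the bound.

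The remaining task is to show these are the \emph{only} minimizers. Suppose $S \in \ms{D}^{1}(\mc{M})$ attains $E_{\alpha}(S) = -\alpha\Vol(\mc{M})$. Rewriting,
\begin{equation}
0 = E_{\alpha}(S) + \alpha\Vol(\mc{M}) = \int_{\mc{M}} \|DS(x)\|^{2}\,dx + \alpha \int_{\mc{M}}\bigl(1 - \|S(x)\|^{2}\bigr)\,dx,
\end{equation}
and both integrands are nonnegative by the previous paragraph. Hence $\|DS(x)\|^{2} = 0$ a.e.\ in $\mc{M}$, and additionally $\|S(x)\|^{2} = 1$ a.e.\ when $\alpha > 0$. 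Since $\mc{M}$ is simply-connected, in particular connected, Lemma~\ref{lem:Du-0-constant} applied componentwise forces $S$ to be equivalent to a constant $p \in \R^{c}$. The a.e.\ simplex constraint then gives $p \in \Delta_{c}$. If $\alpha > 0$, the additional condition $\|p\|^{2} = 1$ together with Lemma~\ref{lem:p-mcS-norm} forces $p \in \mc{B}_{c} = \{e_{1},\dotsc,e_{c}\}$, proving the first case of \eqref{eq:E-alpha-minimizers}; if $\alpha = 0$, $p$ is an arbitrary element of $\Delta$, proving the second case.

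No step poses a serious obstacle: the entire argument is a clean pointwise bound plus an application of the two cited lemmas. The only subtlety is remembering that $\|DS\|^{2}=0$ a.e.\ does not literally imply $S$ is constant, but rather equivalent to one (in the $H^{1}$ sense), which is exactly what Lemma~\ref{lem:Du-0-constant} provides and is compatible with the a.e.\ interpretation of membership in $\ms{D}^{1}(\mc{M})$.
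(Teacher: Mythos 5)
Your proof is correct and follows essentially the same route as the paper: the pointwise bound $\|S(x)\|^{2}\leq 1$ on the simplex, direct verification that the constant fields attain the bound, and then Lemma~\ref{lem:Du-0-constant} plus Lemma~\ref{lem:p-mcS-norm} to pin down the minimizers. Your rewriting of the equality case as the vanishing of a sum of two nonnegative integrals is a slightly tidier packaging of the paper's inequality chain, but it is the same argument.
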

\begin{proof}
Let $p \in \Delta$. Then $\|p\|^{2} \leq \|p\|_{1}=1$. It follows for $S \in \ms{D}^{1}(\mc{M})$ that
\begin{equation}
E_{\alpha}(S) \geq -\alpha \|S\|_{\mc{M}}^{2} \geq -\alpha \|1\|_{\mc{M}} = -\alpha\Vol(\mc{M}),
\end{equation}
which is \eqref{eq:E-alpha-inf}.

We next show that the right-hand side of \eqref{eq:E-alpha-minimizers} specifies minimizers of $E_{\alpha}$. For any $p \in \Delta$, the constant vector field $S_{p}$ is contained in $\ms{D}^{1}(\mc{M})$. Consider specifically $S_{e_{i}},\,i \in [c]$. Since $\|S_{e_{i}}(x)\|=\|e_{i}\|=1$ and $DS_{e_{i}} \equiv 0$, the lower bound is attained, $E_{\alpha}(S_{e_{i}})=-\alpha\Vol(\mc{M})$, and the functions $\{S_{e_{1}},\dotsc,S_{e_{c}}\}$ minimize $E_{\alpha}$, for every $\alpha \geq 0$. If $\alpha=0$, then the constant functions $S_{p}$ are minimizers as well, for any $p \in \Delta$, since then
\begin{equation}\label{eq:proof-fp-minimizer}
E_{\alpha}(S_{p}) = \|DS_{p}\|_{\mc{M}}^{2} = 0 = - 0 \cdot \Vol(\mc{M}).
\end{equation}

We conclude by showing that no minimizers other than \eqref{eq:E-alpha-minimizers} exist. Let $S_{\ast} \in \ms{D}^{1}(\mc{M})$ be another minimizer of $E_{\alpha}$ with $E_{\alpha}(S_{\ast})=-\alpha\Vol(\mc{M})$. We distinguish the two cases $\alpha=0$ and $\alpha >0$.

If $\alpha=0$, then $S_{\ast}$ satisfies \eqref{eq:proof-fp-minimizer} and $\|DS_{\ast}\|_{\mc{M}}^{2}=0$. Since $\|DS_{\ast;i}\|_{\mc{M}}\leq \|DS_{\ast}\|_{\mc{M}}=0$ for every $i \in [c]$, $S_{\ast}$ is constant by Lemma
\ref{lem:Du-0-constant}, i.e.~a $p \in \Delta$ exists such that $S_{\ast}=S_{p}$ a.e.

If $\alpha > 0$, then using the equation $E_{\alpha}(S_{\ast})=-\alpha\Vol(\mc{M})$ and $\|S_{\ast}(x)\|^{2}\leq 1$ gives
\begin{subequations}
\begin{align}
\alpha\Vol(\mc{M}) &\leq \|DS_{\ast}\|_{\mc{M}}^{2} + \alpha\Vol(\mc{M}) = \|DS_{\ast}\|_{1;\mc{M}}^{2} - E_{\alpha}(S_{\ast}) = \alpha\|S_{\ast}\|_{\mc{M}}^{2}
\\
&\leq \alpha \|1\|_{\mc{M}} = \alpha\Vol(\mc{M}),
\end{align}
\end{subequations}
which shows $\|DS_{\ast}\|_{\mc{M}}=0$ and hence by Lemma
\ref{lem:Du-0-constant} again $S_{\ast}=S_{p}$ for some $p \in \Delta$. The preceding inequalities also imply $\Vol(\mc{M})=\|S_{\ast}\|_{\mc{M}}^{2}$, i.e.~$\|S_{\ast}(x)\|=1$ a.e. By Lemma \ref{lem:p-mcS-norm}, we conclude $S_{\ast}=S_{p}$ with $p \in \mc{B}_{c}$, that is $S_{\ast} \in \{S_{e_{1}},\dotsc,S_{e_{c}}\}$.
\end{proof}
Proposition \ref{prop:E-alpha-inf} highlights the effect of the concave term of the objective $E_{\alpha}$ \eqref{eq:def-E-alpha}: labelings are enforced in the absence of data. Below, the latter are taken into account (i) by imposing non-zero boundary conditions and (ii) by initalizing a corresponding gradient flow (Section \ref{sec:Numerical-Example}).

\subsection{Fixed Boundary Conditions}
\label{sec:Fixed Boundary Conditions}

In this section, we consider the case where boundary conditions are imposed by restricting the feasible set of problem \eqref{eq:def-Ealpha-D1M} to
\begin{equation}\label{eq:def-Ag1M}
\mc{A}_{g}^{1}(\mc{M})
= \{S \in \mc{D}^{1}(\mc{M}) \colon S-g \in H_{0}^{1}(\mc{M};\R^{c})\}
= \big(g + H_{0}^{1}(\mc{M};\R^{c})\big) \cap \mc{D}^{1}(\mc{M})
\end{equation}
for some fixed $g$ that prescribes simplex-valued boundary values (in the trace sense).
As intersection of a closed affine subspace and a closed convex set, $\mc{A}_{g}^{1}(\mc{M})$ is closed convex.

Weak lower semicontinuity is a key property for proving the existence of minimizers. In the case of $E_{\alpha}$ \eqref{eq:def-E-alpha} this is not immediate, due to the lack of convexity.
\begin{proposition}\label{prop:Ealpha-wlsc}
The functional $E_{\alpha}$ given by \eqref{eq:def-E-alpha} is weak sequentially lower semicontinuous on $\mc{A}_{g}^{1}(\mc{M})$, i.e.~for any sequence $(S_{n})_{n \in \N} \subset \mc{A}_{g}^{1}(\mc{M})$ weakly converging to $S \in \mc{A}_{g}^{1}(\mc{M})$, the inequality
\begin{equation}
E_{\alpha}(S) \leq \liminf_{n \to \infty} E_{\alpha}(S_{n})
\end{equation}
holds.
\end{proposition}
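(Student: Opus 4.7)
The plan is to split $E_{\alpha}$ into its two pieces and handle them separately: the Dirichlet part $S \mapsto \int_{\mc{M}} \|DS(x)\|^{2}\dd x$ is convex and continuous on $H^{1}(\mc{M};\R^{c})$ and hence weakly lower semicontinuous, while the concave part $S \mapsto -\alpha \int_{\mc{M}}\|S(x)\|^{2}\dd x$ is \emph{strongly} continuous on $L^{2}(\mc{M};\R^{c})$. The bridge between these observations is the Rellich--Kondrakov compact embedding \eqref{eq:compact-embedding}, which will upgrade $H^{1}$-weak convergence to $L^{2}$-strong convergence.

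Concretely, let $(S_{n}) \subset \mc{A}_{g}^{1}(\mc{M})$ with $S_{n} \rightharpoonup S$ in $H^{1}(\mc{M};\R^{c})$. First I would observe that $S_{n} - g$ lies in $H_{0}^{1}(\mc{M};\R^{c})$ and satisfies $S_{n} - g \rightharpoonup S - g$ weakly in $H_{0}^{1}(\mc{M};\R^{c})$ (the affine translation by the fixed $g$ is weakly continuous). By the compact embedding \eqref{eq:compact-embedding}, one then extracts strong $L^{2}$-convergence $S_{n} - g \to S - g$, and translating back yields $S_{n} \to S$ strongly in $L^{2}(\mc{M};\R^{c})$. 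Continuity of the $L^{2}$-norm under strong convergence gives
\begin{equation}
\lim_{n\to\infty}\int_{\mc{M}} \|S_{n}(x)\|^{2}\dd x = \int_{\mc{M}} \|S(x)\|^{2}\dd x.
\end{equation}

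For the gradient term, the map $S \mapsto \int_{\mc{M}}\|DS(x)\|^{2}\dd x$ is the square of a continuous seminorm on $H^{1}(\mc{M};\R^{c})$ and is convex, so it is weakly sequentially lower semicontinuous (this is a standard consequence of Proposition~\ref{prop:properties-Banach}(a) applied to the convex sublevel sets, or equivalently of Mazur's lemma). Hence
\begin{equation}
\int_{\mc{M}}\|DS(x)\|^{2}\dd x \;\leq\; \liminf_{n\to\infty}\int_{\mc{M}}\|DS_{n}(x)\|^{2}\dd x.
\end{equation}

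Combining the liminf inequality for the Dirichlet term with the genuine limit for the $L^{2}$-term, and using that $\liminf(a_{n}+b_{n}) \geq \liminf a_{n} + \lim b_{n}$ whenever $b_{n}$ converges, gives
\begin{equation}
E_{\alpha}(S) = \int_{\mc{M}}\|DS\|^{2}\dd x - \alpha\int_{\mc{M}}\|S\|^{2}\dd x \;\leq\; \liminf_{n\to\infty}E_{\alpha}(S_{n}),
\end{equation}
which is the claim. The only subtle point -- and really the crux of the argument -- is the compactness step, since weak $H^{1}$-convergence alone would leave the concave term uncontrolled; it is precisely the compact embedding into $L^{2}$ that defeats the nonconvexity of $E_{\alpha}$.
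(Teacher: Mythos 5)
Your proof is correct and follows essentially the same route as the paper: the crux in both is the Rellich--Kondrakov compact embedding turning weak $H^{1}$-convergence of $S_{n}-g$ into strong $L^{2}$-convergence of $S_{n}$, so that the concave term passes to the limit while the quadratic gradient term is handled by weak lower semicontinuity. The only cosmetic difference is that the paper absorbs the $L^{2}$-term into the full $H^{1}$-norm, writing $E_{\alpha}(S)=\|S\|_{1;\mc{M}}^{2}-(1+\alpha)\|S\|_{\mc{M}}^{2}$ so as to invoke Proposition~\ref{prop:properties-Banach}(c) directly, whereas you argue weak lower semicontinuity of the Dirichlet seminorm via convexity; both are standard and equivalent.
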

\begin{proof}
Let $S_{n} \rightharpoonup S$ converge weakly in $\mc{A}_{g}^{1}(\mc{M}) \subset H_{0}^{1}(\mc{M};\R^{c})$. Then, by Prop.~\ref{prop:properties-Banach}(c),
\begin{equation}\label{eq:proof-fnorm-wlsc}
\|S\|_{1;\mc{M}} \leq \liminf_{n \to \infty} \|S_{n}\|_{1;\mc{M}}.
\end{equation}
Since $S,S_{n} \in \mc{A}_{g}^{1}(\mc{M})$, we also have $(S_{n}-g) \rightharpoonup (S-g)$ in $H_{0}^{1}(\mc{M};\R^{c})$ by \eqref{eq:def-Ag1M} and consequently $S_{n}\to S$ strongly in $L^{2}(\mc{M};\R^{c})$ due to \eqref{eq:compact-embedding}. Taking into account \eqref{eq:proof-fnorm-wlsc} and $\liminf_{n \to \infty} \|s_{n}\|_{\mc{M}}=\lim_{n \to \infty}\|S_{n}\|_{\mc{M}} = \|S\|_{\mc{M}}$, we obtain
\begin{subequations}
\begin{align}
E_{\alpha}(S)
&= \|S\|_{1;\mc{M}}^{2}-(1+\alpha)\|S\|_{\mc{M}}^{2}
\leq \liminf_{n \to \infty} \|S_{n}\|_{1;\mc{M}}^{2} + \liminf_{n \to \infty} \big(-(1+\alpha)\|S_{n}\|_{\mc{M}}^{2}\big)
\\
&\leq \liminf_{n \to \infty} E_{\alpha}(S_{n}).
\end{align}
\end{subequations}
\end{proof}
We are now prepared to show that $E_{\alpha}$ attains its minimal value on $\mc{A}_{g}^{1}(\mc{M})$, following the basic proof pattern of \cite[Ch.~38]{Zeidler:1985aa}.
\begin{theorem}\label{thm:inf-Ealpha-A1M}
Let $E_{\alpha}$ be given by \eqref{eq:def-E-alpha}. There exists $S_{\ast} \in \mc{A}_{g}^{1}(\mc{M})$ such that
\begin{equation}\label{eq:Ealpha-mcA1g}
E_{\alpha}^{\ast} := E_{\alpha}(S_{\ast}) = \inf_{S \in \mc{A}_{g}^{1}(\mc{M})} E_{\alpha}(S).
\end{equation}
\end{theorem}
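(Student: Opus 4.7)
The plan is to apply the direct method of the calculus of variations, since all three required ingredients are now in place: a lower bound on $E_\alpha$, weak sequential lower semicontinuity (Proposition \ref{prop:Ealpha-wlsc}), and the convexity/closedness of the admissible set $\mc{A}_g^1(\mc{M})$.

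First, I would observe that $E_\alpha$ is bounded below on $\mc{A}_g^1(\mc{M})$. Indeed, since $\mc{A}_g^1(\mc{M}) \subset \ms{D}^1(\mc{M})$, Proposition \ref{prop:E-alpha-inf} yields $E_\alpha(S) \geq -\alpha\Vol(\mc{M})$ for all admissible $S$. Consequently, $E_\alpha^\ast := \inf_{S \in \mc{A}_g^1(\mc{M})} E_\alpha(S) \in \R$, and we can choose a minimizing sequence $(S_n)_{n \in \N} \subset \mc{A}_g^1(\mc{M})$ with $E_\alpha(S_n) \to E_\alpha^\ast$.

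Next, I would show that $(S_n)$ is bounded in $H^1(\mc{M};\R^c)$. The $L^2$-part is immediate from the simplex constraint: $\|S_n(x)\|^2 \leq \|S_n(x)\|_1^2 = 1$ a.e., so $\|S_n\|_{\mc{M}}^2 \leq \Vol(\mc{M})$. For the derivative part, since eventually $E_\alpha(S_n) \leq E_\alpha^\ast + 1$, rearranging the definition \eqref{eq:def-E-alpha} gives
\begin{equation}
\|DS_n\|_{\mc{M}}^2 = E_\alpha(S_n) + \alpha\|S_n\|_{\mc{M}}^2 \leq E_\alpha^\ast + 1 + \alpha\Vol(\mc{M}),
\end{equation}
so $(S_n)$ is bounded in $H^1(\mc{M};\R^c)$. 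By reflexivity of the Hilbert space (Proposition \ref{prop:properties-Banach}(b)), a subsequence (not relabelled) satisfies $S_n \rightharpoonup S_\ast$ in $H^1(\mc{M};\R^c)$ for some $S_\ast$.

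Then I would verify $S_\ast \in \mc{A}_g^1(\mc{M})$. The set is the intersection of the closed affine subspace $g + H_0^1(\mc{M};\R^c)$ and the closed convex set $\mc{D}^1(\mc{M})$; hence it is closed and convex, and by Proposition \ref{prop:properties-Banach}(a) weakly closed. Therefore $S_\ast \in \mc{A}_g^1(\mc{M})$. Finally, weak sequential lower semicontinuity (Proposition \ref{prop:Ealpha-wlsc}) yields
\begin{equation}
E_\alpha(S_\ast) \leq \liminf_{n\to\infty} E_\alpha(S_n) = E_\alpha^\ast \leq E_\alpha(S_\ast),
\end{equation}
so $S_\ast$ attains the infimum, proving \eqref{eq:Ealpha-mcA1g}. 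The only potentially delicate point is the weak closedness of $\mc{A}_g^1(\mc{M})$, which requires verifying that $\ms{D}^1(\mc{M})$ itself is convex and $H^1$-closed; convexity of $\Delta_c$ handles the former, and $L^2$-closure under a.e.~simplex membership (combined with the continuous embedding $H^1 \hookrightarrow L^2$) handles the latter. The rest of the argument is a clean application of the direct method.
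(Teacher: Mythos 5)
Your proposal is correct and follows essentially the same route as the paper: a minimizing sequence, the uniform $H^1$-bound obtained from the simplex constraint and the near-optimality of the sequence, weak compactness, weak closedness of $\mc{A}_{g}^{1}(\mc{M})$, and Proposition \ref{prop:Ealpha-wlsc} to pass to the limit. The only cosmetic difference is that you invoke the lower bound of Proposition \ref{prop:E-alpha-inf} explicitly to guarantee $E_{\alpha}^{\ast} > -\infty$, which the paper uses implicitly.
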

\begin{proof}
Let $(S_{n})_{n \in \N} \subset \mc{A}_{g}^{1}(\mc{M})$ be a minimizing sequence such that
\begin{equation}\label{eq:proof-inf-Ealpha-A1M-fn}
\lim_{n \to \infty} E_{\alpha}(S_{n})=E_{\alpha}^{\ast}.
\end{equation}
Then there exists some sufficiently large $n_{0} \in \N$ such that
\begin{equation}
1+E_{\alpha}^{\ast} \geq E_{\alpha}(S_{n})
= \|S_{n}\|_{1;\mc{M}}^{2}-(1+\alpha)\|S_{n}\|_{\mc{M}}^{2},\qquad \forall n \geq n_{0}.
\end{equation}
Since $S_{n}(x) \in \Delta$ for a.e.~$x \in \mc{M}$, we have $\|S_{n}\|_{\mc{M}}^{2} \leq \Vol(\mc{M})$ and hence obtain
\begin{equation}
\|S_{n}\|_{1;\mc{M}}^{2}
\leq 1+E_{\alpha}^{\ast} + (1+\alpha)\|S_{n}\|_{\mc{M}}^{2}
\leq 1+E_{\alpha}^{\ast} + (1+\alpha)\Vol(\mc{M}),\qquad
\forall n \geq n_{0}.
\end{equation}
Thus the sequence $(S_{n})_{n \in \N} \subset H^{1}(\mc{M};\R^{c})$ is bounded and, by Prop.~\ref{prop:properties-Banach}(c), we may extract a weakly converging subsequence $S_{n_{k}} \rightharpoonup S_{\ast} \in H^{1}(\mc{M};\R^{c})$. Since $\mc{A}_{g}^{1}(\mc{M}) \subset H^{1}(\mc{M};\R^{c})$ is closed convex, Prop.~\ref{prop:properties-Banach}(a) implies $S_{\ast} \in \mc{A}_{g}^{1}(\mc{M})$. Consequently, by Prop.~\ref{prop:Ealpha-wlsc} and \eqref{eq:proof-inf-Ealpha-A1M-fn},
\begin{equation}
E_{\alpha}(S_{\ast}) \leq \liminf_{k \to \infty} E_{\alpha}(S_{n_{k}}) = \lim_{k \to \infty} E_{\alpha}(S_{n_{k}}) = E_{\alpha}^{\ast}
\end{equation}
which implies $E_{\alpha}(S_{\ast})=E_{\alpha}^{\ast}$, i.e.~$S_{\ast} \in \mc{A}_{g}^{1}(\mc{M})$ minimizes $E_{\alpha}$.
\end{proof}
\subsection{Numerical Algorithm and Example}\label{sec:Numerical-Example}
We consider the variational problem \eqref{eq:Ealpha-mcA1g}
\begin{equation}\label{eq:problem-for-numerics}
\inf_{S \in \mc{A}_{g}^{1}(\mc{M})} \int_{\mc{M}}\|D S\|^{2} - \alpha \|S\|^{2} dx,
\end{equation}
for some fixed $g$ specifying the boundary values $S|_{\partial\mc{M}} = g|_{\partial\mc{M}}$, and the problem to compute a local minimum numerically using an optimization scheme that mimics the $S$-flow of Proposition \ref{prop:decoupled-flow}.

Based on \eqref{eq:def-Ag1M}, we rewrite the problem in the form
\begin{subequations}
\begin{align}
&\inf_{f \in H_{0}^{1}(\mc{M};\R^{c})}\Big\{\|D (g+f)\|_{\mc{M}}^{2} - \alpha \|g+f\|_{\mc{M}}^{2} dx + \delta_{\mc{D}^{1}(\mc{M})}(g+f)\Big\}
\\
&= \inf_{f \in H_{0}^{1}(\mc{M};\R^{c})}\Big\{\|D f\|_{\mc{M}}^{2} + 2\la D g, D f \ra_{\mc{M}} - \alpha\big(\|f\|_{\mc{M}}^{2}+2\la g,f\ra_{\mc{M}}\big)+ \delta_{\mc{D}^{1}(\mc{M})}(g+f)\Big\} + C,
\end{align}
\end{subequations}
where the last constant $C$ collects terms not depending on $f$. We discretize the problem as follows. $f$ becomes a vector $f \in \R^{c\,n}$ with $n = |\mc{I}|$ subvectors $f_{i} \in \R^{c},\,i \in \mc{I}$ or alternatively with $c = |\mc{J}|$ subvectors $f^{j},\,j \in \mc{J}$. The inner product $\la g, f \ra_{\mc{M}}$ is replaced by $\la g, f \ra = \sum_{i \in [n]} \la g_{i},f_{i}\ra = \sum_{j \in [c]} \la g^{j},f^{j}\ra$. We keep the symbols $f, g$ for simplicity and indicate the discretized setting by the subscript $n$ as introduced next.

$D$ becomes a gradient matrix $D_{n}$ that estimates the gradient of each subvector $f^{j}$ separately, such that
\begin{equation}
L_{n} f := D_{n}^{\T} D_{n} f
\end{equation}
is the basic discrete 5-point stencil Laplacian  applied to each subvector $f^{j}$. The feasible set $\mc{D}^{1}(\mc{M})$ is replaced by the closed convex set
\begin{equation}\label{eq:def-mcDn}
\mc{D}_{n} := \{f \geq 0 \colon \la \eins_{c},f_{i}\ra=1,\; \forall i \in \mc{I}\}.
\end{equation}
Thus the discretized problem reads
\begin{equation}\label{eq:problem-discretized}
\inf_{f}\Big\{\|D_{n} f\|^{2} + 2\la L_{n} g-\alpha g,f\ra - \alpha\|f\|^{2} + \delta_{\mc{D}_{n}}(g+f)\Big\}.
\end{equation}
Having computed a local minimum $f_{\ast}$, the corresponding local minimum of \eqref{eq:problem-for-numerics} is $S_{\ast}=g+f_{\ast}$.

In order to compute $f_{\ast}$, we applied the proximal forward-backward scheme
\begin{equation}\label{eq:fk-PALM}
f^{(k+1)}
= \arg\min_{f}\Big\{
\|D_{n} f\|^{2}+2\la L_{n} g-\alpha (g+f^{(k)}),f\ra + \frac{1}{2\tau_{k}}\|f-f^{(k)}\|^{2} + \delta_{\mc{D}_{n}}(g+f)\Big\}
,\quad k \geq 0,
\end{equation}
with proximal parameters $\tau_{k},\,k \in \N$ and initialization $f_{i}^{(0)},\, i \in \mc{I}$ specified further below. The iterative scheme \eqref{eq:fk-PALM} is a special case of the PALM algorithm \cite[Sec.~3.7]{Bolte:2014aa}.
Ignoring the proximal term, each problem \eqref{eq:fk-PALM} amounts to solve $c$ (discretized) Dirichlet problems with the boundary values of $g^{j},\, j \in [c]$ imposed, and with right-hand sides that change during the iteration since they depend on $f^{(k)}$. The solutions $(f^{j})^{(k)},\, j \in \mc{J}$ to these Dirichlet problems depend on each other, however, through the feasible set \eqref{eq:def-mcDn}. At each iteration $k$, problem \eqref{eq:fk-PALM} can be solved by convex programming. The proximal parameters $\tau_{k}$ act as stepsizes such that the sequence $f^{(k)}$ does not approach a local minimum too rapidly. Then the interplay between the linear form that adapts during the iteration and the regularizing effect of the Laplacians can find a labeling (partition) corresponding to a good local optimum.

As for $g$, we chose $g_{i}=L_{i}(\eins_{\mc{S}}),\,i \in \mc{I}$ at boundary vertices $i$ and $g_{i} =0$ at every interior vertex $i$. Consequently, with the initialization $f_{i}^{(0)}=L_{i}(\eins_{\mc{S}}),\,i \in \mc{I}$ at interior vertices (the boundary values of $f$ are zero), the sequence $S^{(k)}=g+f^{(k)}$  mimics the $S$-flow of Proposition \ref{prop:decoupled-flow} where the given data also show up in the initialization $\ol{S}(0)$ only.

Figure \ref{fig:31-colors} provides an illustration using an experiment adopted from \cite[Fig.~6]{Astrom:2017ac}, originally designed to evaluate the performance of geometric regularization of label assignments through the assignment flow in an unbiased way. Parameter values are specified in the caption. The result confirms that the continuous-domain formulations discussed above represent the assignment flow at the smallest spatial scale.
\begin{figure}
\centerline{
\includegraphics[width=0.24\textwidth]{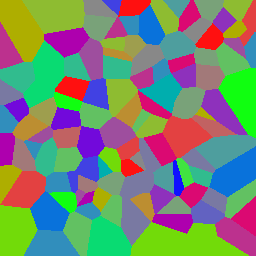}\hfill
\includegraphics[width=0.24\textwidth]{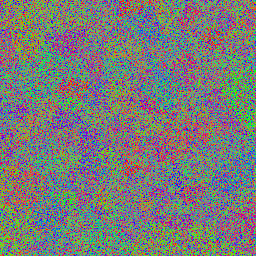}\hfill
\includegraphics[width=0.24\textwidth]{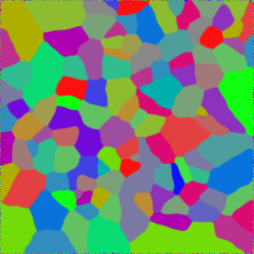}\hfill
\includegraphics[width=0.24\textwidth]{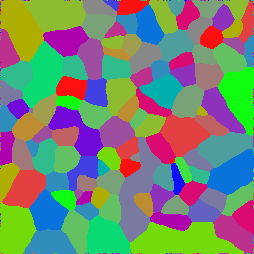}
}
\vspace{0.005\textwidth}
\centerline{
\includegraphics[width=0.24\textwidth]{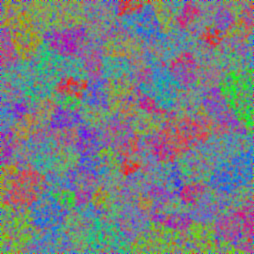}\hfill
\includegraphics[width=0.24\textwidth]{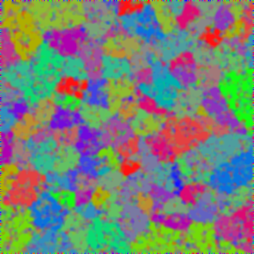}\hfill
\includegraphics[width=0.24\textwidth]{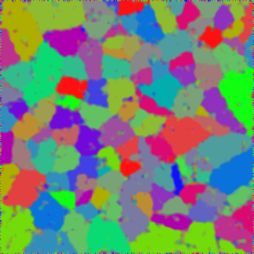}\hfill
\includegraphics[width=0.24\textwidth]{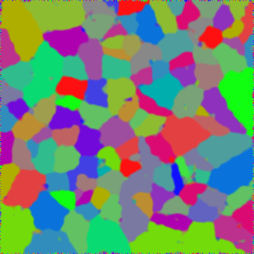}
}
\caption{
Evaluation of the numerical scheme \eqref{eq:fk-PALM} that mimics the $S$-flow of Proposition \ref{prop:decoupled-flow}. Parameter values: $\alpha=1, \tau_{k}=\tau=10,\, \forall k$. \textbf{Top}, from left to right: Ground truth, noisy input data $f^{(0)}$, iterate $f^{(100)}$ and $f_{\ast}$ resulting from $f^{(100)}$ by a trivial rounding step. $S^{(k)}=f^{(k)}+g$ differs from $f^{(k)}$ by the boundary values corresponding to the noisy input data. Inspecting the values of $f^{(100)}$ close to the boundary shows that the influence of boundary noise is minimal. \textbf{Bottom}, from left to right: The iterates $f^{(10)}, f^{(20)}, f^{(30)}, f^{(40)}$. Taking into account rounding as post-processing step, the sequence $f^{(k)}$ quickly converges after rounding to a reasonable partition. About $50$ more iterations are required to fix the values at merely few hundred remaining pixels. Slight rounding of the geometry of the components of the partition, in comparison to ground truth, corresponds to using uniform weights \eqref{eq:weights-Omega-i} for the assignment flow.
}
\label{fig:31-colors}
\end{figure}
%


\subsection{A PDE Characterizing Optimal Assignment Flows}\label{sec:VI-PDE}

\begin{proposition}\label{prop:VI}
Let $S_{\ast}$ solve the variational problem \eqref{eq:problem-for-numerics}. Then $S_{\ast}$ satisfies the variational inequality
\begin{equation}\label{eq:VI-Sast}
\la DS_{\ast},DS-DS_{\ast}\ra_{\mc{M}} - \alpha \la S_{\ast},S-S_{\ast}\ra_{\mc{M}} \geq 0,\qquad \forall S \in \mc{A}_{g}^{1}(\mc{M}).
\end{equation}
\end{proposition}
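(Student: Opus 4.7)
The plan is to recognize this as a direct application of Theorem \ref{thm:VI-necessarily}, which is exactly the abstract result on necessary variational inequalities recorded for this purpose in Section \ref{sec:Preliminaries-Existence-Minimizers}. The needed verification amounts to checking (i) that the feasible set is a convex nonempty subset of a real locally convex space, and (ii) that the functional is Gateaux differentiable, after which the Gateaux derivative must be computed and the theorem invoked with $b = 0$.

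First I would take $X = H^{1}(\mc{M};\R^{c})$, $C = \mc{A}_{g}^{1}(\mc{M})$, and $F(S) = \|DS\|_{\mc{M}}^{2} - \alpha\|S\|_{\mc{M}}^{2}$, so that $F = E_{\alpha}$ on $C$. The set $\mc{A}_{g}^{1}(\mc{M})$ was already observed in Section \ref{sec:Fixed Boundary Conditions} to be closed and convex (as the intersection of an affine subspace with a convex set), and it is nonempty because $S_{\ast} \in C$ by hypothesis. The space $X$ is a Hilbert space and hence locally convex, so the structural assumptions of Theorem \ref{thm:VI-necessarily} hold.

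Next I would verify Gateaux differentiability of $F$ and compute its derivative. Since $F$ is a continuous quadratic form on the Hilbert space $H^{1}(\mc{M};\R^{c})$, for any $S \in C$ and any $V \in H^{1}(\mc{M};\R^{c})$ a direct computation in $t$ of the expansion $F(S+tV)$ gives
\begin{equation}
\la F'(S), V \ra = 2\la DS, DV \ra_{\mc{M}} - 2\alpha\la S, V \ra_{\mc{M}}.
\end{equation}
This pairing is continuous in $V$ on $H^{1}(\mc{M};\R^{c})$, so $F'(S) \in X'$ is well-defined.

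Finally, since by assumption $S_{\ast}$ is a minimizer of $F$ on $C$, Theorem \ref{thm:VI-necessarily} applied with $b = 0$ yields
\begin{equation}
\la F'(S_{\ast}), S - S_{\ast} \ra \geq 0, \qquad \forall S \in \mc{A}_{g}^{1}(\mc{M}).
\end{equation}
Substituting the expression for $F'(S_{\ast})$ and dividing by $2$ produces exactly \eqref{eq:VI-Sast}. There is essentially no obstacle here; the only subtlety is to record convexity of $\mc{A}_{g}^{1}(\mc{M})$ and the continuity of the derivative, both of which are immediate from the definitions.
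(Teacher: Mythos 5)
Your proposal is correct and follows essentially the same route as the paper's own proof: compute the Gateaux derivative of $E_{\alpha}$ and invoke Theorem \ref{thm:VI-necessarily} (with $b=0$). You are merely more explicit about verifying the structural hypotheses, and your derivative formula $\la F'(S),V\ra = 2\la DS,DV\ra_{\mc{M}} - 2\alpha\la S,V\ra_{\mc{M}}$ is in fact the correct form of the expression the paper writes down.
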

\begin{proof}
Functional $E_{\alpha}$ given by \eqref{eq:problem-for-numerics} is Gateaux-differentiable with derivative
\begin{equation}
\la E'(\alpha)(S_{\ast}),S\ra_{H^{-1}(\mc{M};\R^{c})\times H_{0}^{1}(\mc{M};\R^{c})} = 2\big(\la DS_{\ast},S\ra_{\mc{M}} - \alpha\la S_{\ast}, S\ra_{\mc{M}}\big).
\end{equation}
The assertion follows from applying Theorem \ref{thm:VI-necessarily}.
\end{proof}
We conclude this section by deriving a PDE corresponding to \eqref{eq:VI-Sast}, that a minimizer $S_{\ast}$ is supposed to satisfy in the weak sense. The derivation is \textit{formal} in that we adopt the unrealistic regularity assumption
\begin{equation}\label{eq:S-regularity-assumption}
S_{\ast} \in \mc{A}_{g}^{2}(\mc{M}),
\end{equation}
with $\mc{A}_{g}^{2}(\mc{M})$ defined analogous to \eqref{eq:def-Ag1M}. While this will hold for the continuous-domain \textit{linear} problems corresponding to \eqref{eq:fk-PALM} at each step $k$ of the iteration  and for sufficiently smooth $\partial\mc{M}$, it will not hold in the limit $k \to \infty$, since we expect (and wish) $S_{\ast}$ to become discontinuous, contrary to the regularity assumption \eqref{eq:S-regularity-assumption} and the continuity implied by the Sobolev embedding theorem for $\mc{M} \subset \R^{d}$ with $d=2$.
Nevertheless, since the PDE provides another interpretation of the assignment flow, we state it -- see \eqref{eq:AF-PDE} below -- and hope it will stimulate further research.

In view of the assumption \eqref{eq:S-regularity-assumption}, set
\begin{equation}
S_{\ast} = g + f_{\ast},\qquad
f_{\ast} \in H_{0}^{2}(\mc{M};\R^{c}).
\end{equation}
Inserting $S_{\ast}$ and $S=g+h,\; h \in H_{0}^{1}(\mc{M};\R^{c})$, into \eqref{eq:VI-Sast} and partial integration gives
\begin{equation}\label{eq:VI-partially-integrated}
\la -\Delta S_{\ast}-\alpha S_{\ast},h-f_{\ast}\ra_{\mc{M}} \geq 0,
\end{equation}
where $\Delta S_{\ast} = (\Delta S_{\ast;1},\dotsc,\Delta S_{\ast;c})^{\T}$ applies componentwise.
Using the shorthands
\begin{subequations}
\begin{align}
\nu_{\alpha}(S_{\ast}) &= -\Delta S_{\ast}-\alpha S_{\ast},
\\
\mu_{\alpha}(S_{\ast}) &= \nu_{\alpha}(S_{\ast}) - \la\nu_{\alpha}(S_{\ast}),S_{\ast}\ra_{\R^{2}}\eins_{c},
\end{align}
\end{subequations}
where $\la\nu_{\alpha}(S_{\ast}),S_{\ast}\ra_{\R^{2}}$ denotes the function
$x \mapsto \big\la\nu_{\alpha}(S_{\ast})(x),S_{\ast}(x)\big\ra,\; x \in \mc{M}$,
we have
\begin{subequations}
\begin{align}\label{eq:mu-alpha-f-ast=0}
\la\mu_{\alpha}(S_{\ast}),S_{\ast}\ra_{\mc{M}} &= 0
\intertext{
since $\la \eins_{c},S_{\ast}(x)\ra=1$ for a.e.~$x$, and
}
\la \mu_{\alpha}(S_{\ast}),S\ra_{\mc{M}} &= \la\nu_{\alpha}(S_{\ast}),h-f_{\ast}\ra_{\mc{M}} \geq 0,
\end{align}
\end{subequations}
which is \eqref{eq:VI-partially-integrated}. Since $S(x) \geq 0$ a.e.~in $\mc{M}$ and may have arbitrary support, we deduce from the inequality $\la\mu_{\alpha}(S_{\ast}),S\ra_{\mc{M}} \geq 0$ and from the self-duality of the nonnegative orthant $\R_{+}^{c}$ that $\mu_{\alpha}(S_{\ast}) \geq 0$ a.e.~in $\mc{M}$. Since also $S_{\ast} \geq 0$ a.e., this implies that equation \eqref{eq:mu-alpha-f-ast=0} holds pointwise a.e.~in $\mc{M}$:
\begin{equation}
\mu_{\alpha}(S_{\ast})(x) S_{\ast}(x)
= \nu_{\alpha}(S_{\ast})(x)S_{\ast}(x) - \big\la\nu_{\alpha}(S_{\ast})(x) S_{\ast}(x)\big\ra S_{\ast}(x) = 0\qquad \text{a.e.~in}\;\mc{M}.
\end{equation}
Substituting $\nu_{\alpha}(S_{\ast})$ we deduce that a minimizer $S_{\ast}=g+f_{\ast}$ characterized by the variational inequality \eqref{eq:VI-Sast} weakly satisfies the PDE
\begin{equation}\label{eq:AF-PDE}
R_{S_{\ast}}(-\Delta S_{\ast}-\alpha S_{\ast}) = 0,
\end{equation}
where $R_{S_{\ast}}$ defined by \eqref{eq:def-Rp} applies $R_{S_{\ast}(x)}$ to vector $(-\Delta S_{\ast}-\alpha S_{\ast})(x)$ at every $x \in \mc{M}$.

\newpage
\begin{remark}[Comments]\label{rem:AF-PDE}$\text{ }$
\begin{enumerate}[(1)]
\item
We point out that computing a vector field $S_{\ast}$ satisfying \eqref{eq:VI-Sast} is difficult in practice, due to the nonconvexity of problem \eqref{eq:problem-for-numerics}. On the other hand, the algorithm proposed in Section \ref{sec:Numerical-Example} in the result illustrated by Figure \ref{fig:31-colors} shows that good suboptima can be computed by merely solving a sequence of simple problems.
\item
As already pointed out at the beginning of this section, the derivation of the PDE \eqref{eq:AF-PDE} is merely a formal one, due to the unrealistic regularity assumption \eqref{eq:S-regularity-assumption}.
In fact, since $\ker R_{S_{\ast}}(x)=\R\eins_{c}$, equation \eqref{eq:AF-PDE} says that $S_{\ast}$ is constant up to a set of measure zero. While the numerical result (Fig.~\ref{fig:31-colors}) clearly reflects this, the discontinuity of $S_{\ast}$ conflicts with assumption \eqref{eq:S-regularity-assumption}.
\end{enumerate}
\end{remark}


\section{Conclusion}\label{sec:Conclusion}
We presented a novel parametrization of the assignment flow for contextual data classification on graphs. The dominating part of the flow admits the interpretation as Riemannian gradient flow with respect to the underlying information geometry, unlike the original formulation of the assignment flow. A decomposition of the corresponding potential by means of a non-local graph Laplacian makes explicit the interaction of two processes: regularization of label assignments and gradual enforcement of unambiguous decisions. The assignment flow combines these aspects in a seamless way, unlike traditional approaches where solutions to convex relaxations require postprocessing. It is remarkable that this behaviour is solely induced by the underlying information geometry.

We studied a continuous-domain variational formulation as counterpart of the discrete formulation restricted to a local discrete Laplacian (nearest neighbor interaction).
A numerical algorithm in terms of a sequence of simple linear elliptic problems reproduces results that were obtained with the original formulation of the assignment flow using completely different numerics (geometric ODE integration). This illustrates the derived mathematical relations.

\vspace{0.25cm}
We outline three attractive directions of further research.
\begin{itemize}
\item 
We clarified in Section \ref{sec:zero-scale-functional} that the inherent \textit{smooth} setting of the assignment flow \eqref{eq:assignment-flow} translates under suitable assumptions to the sequence of linear (discretized) elliptic PDE problems \eqref{eq:fk-PALM} together with a simple convex constraint. We did not touch on the limit problem, however. More mathematical work is required here, cf.~Remark \ref{rem:AF-PDE}.

Since the assignment flow returns image \textit{partitions} when applied to image features on a grid graph, the situation reminds us of the Mumford-Shah functional \cite{Mumford:1989aa} and its approximation by a sequence of $\Gamma$-converging smooth elliptic problems \cite{Ambrosio:1990aa}.
Likewise, one may regard the concave second term of \eqref{eq:problem-for-numerics} together with the convex constraint $S \in \mc{A}_{g}^{1}$ as a vector-valued counterpart of the basic nonnegative double-well potential of scalar phase-field models for binary segmentation \cite{Sternberg:1991aa,Cristoferi:2018aa}. In these works, too, nonsmooth limit cases result from $\Gamma$-converging simpler problems.
\item
Adopting the viewpoint of evolutionary dynamics \cite{Hofbauer:2003aa} on label assignment, the assignment flow may be characterized as spatially coupled replicator dynamics. To the best of our knowledge, our paper \cite{Astrom:2017ac} seems to be the first one that used information theory to formulate this spatial coupling. Some consequences of the geometry were elaborated in the present paper and discussed above. 

We point out that the literature on evolutionary dynamics in general, and specifically on the replicator equation, is vast. We merely point out few works on models involving the replicator equation and spatial interaction in physics 
\cite{Traulsen:2004aa,deForest:2013aa}, applied mathematics \cite{Novozhilov:2011aa,Bratus:2014aa},  including extensions to scenarios with an infinite number of strategies (as opposed to selecting from a finite set of labels) -- see \cite{Ambrosio:2018aa} and references therein.

In this context, our work might stimulate researchers working on spatially extended evolutionary dynamics in various scientific disciplines. In particular, generalizing our approach to continuous-domain integro-differential models seem attractive that conform to the assigment flow with \textit{non-local} interactions (i.e.~with larger neighborhoods $|\mc{N}_{i}|,\,i \in \mc{I}$) and the underlying geometry.
\item
Last but not least, our work may support a better understanding of \textit{learning} with networks. Our preliminary work on learning the weights \eqref{eq:weights-Omega-i} using the \textit{linearized} assignment flow \cite{Huhnerbein:2019aa} on a \textit{single} graph (`layer') revealed the model expressiveness of this limited scenario, on the one hand, and that subdividing complex learning tasks in this way avoids `black box behaviour', on the other hand. We hope that the continuous-domain perspective developed in this paper in terms of sequences of linear PDEs will support our further understanding of learning with hierarchical `deeper' architectures.
\end{itemize}

\begin{acknowledgments}
  Financial support by the German Science Foundation (DFG), grant GRK 1653, is gratefully acknowledged. This work has also been stimulated by the Heidelberg Excellence Cluster STRUCTURES, funded by the DFG under Germany's Excellen Strategy EXC-2181/1 - 390900948.
\end{acknowledgments}

\bibliographystyle{amsalpha}
\bibliography{continuous-domain_assignment_flows}
\label{lastpage}
\end{document}